\newcommand\Ga{\Gamma}
\newcommand\iy\infty
\newcommand\CC{\mathbb{C}}
\newcommand\ZZ{\mathbb{Z}}
\newcommand\RR{\mathbb{R}}
\newcommand\const{{\rm const}}
\newcommand{\hyp}[5]{\,\mbox{}_{#1}F_{#2}\!\left(
  \genfrac{}{}{0pt}{}{#3}{#4};#5\right)}
\newcommand\Tstrut{\rule{0pt}{3ex}}         
\newcommand\Bstrut{\rule[-3ex]{0pt}{0pt}}   
\renewcommand\Re{{\rm Re}\,}
\numberwithin{equation}{section}
\newtheorem{Theorem}{Theorem}[section]
\newtheorem{Lemma}[Theorem]{Lemma}
\newtheorem{Proposition}[Theorem]{Proposition}
 { \theoremstyle{definition}
\newtheorem{Remark}[Theorem]{Remark} }
\begin{document}

\newcommand{\arXivNumber}{1504.08144}


\renewcommand{\thefootnote}{$\star$}

\renewcommand{\PaperNumber}{074}

\FirstPageHeading

\ShortArticleName{Transmutation Operators}

\ArticleName{Fractional Integral and Generalized Stieltjes\\
Transforms for Hypergeometric Functions\\ as
Transmutation Operators\footnote{This paper is a~contribution to the Special Issue on Exact Solvability and Symmetry Avatars
in honour of Luc Vinet.
The full collection is available at
\href{http://www.emis.de/journals/SIGMA/ESSA2014.html}{http://www.emis.de/journals/SIGMA/ESSA2014.html}}}

\Author{Tom H.~KOORNWINDER}

\AuthorNameForHeading{T.H.~Koornwinder}

\Address{Korteweg-de Vries Institute, University of Amsterdam,\\
P.O.\ Box 94248, 1090 GE Amsterdam, The Netherlands}
\Email{\href{mailto:T.H.Koornwinder@uva.nl}{T.H.Koornwinder@uva.nl}}
\URLaddress{\url{https://staff.fnwi.uva.nl/t.h.koornwinder/}}

\ArticleDates{Received April 29, 2015, in f\/inal form September 14, 2015; Published online September 20, 2015}

\Abstract{For each of the eight $n$-th derivative parameter changing
formulas for Gauss hypergeometric
functions a corresponding fractional integration formula is given.
For both types of formulas the dif\/ferential or integral operator
is intertwining between two actions
of the hypergeometric dif\/ferential operator
(for two sets of parameters): a so-called transmutation
property. This leads to eight fractional integration formulas and four
generalized Stieltjes transform formulas for
each of the six dif\/ferent explicit solutions of the hypergeometric
dif\/ferential equation, by letting the transforms act on the solutions.
By specialization two Euler type integral representations for each
of the six solutions are obtained.}

\Keywords{Gauss hypergeometric function; Euler integral representation; fractional integral transform; Stieltjes transform; transmutation formula}

\Classification{33C05; 44A15; 44A20; 26A33}

\renewcommand{\thefootnote}{\arabic{footnote}}
\setcounter{footnote}{0}

\section{Introduction}\label{81}

This paper has two sources of inspiration. The f\/irst aim was to give a complete list
of the fractional integration formulas corresponding to the eight parameter changing
$n$-th derivative formulas for Gauss hypergeometric functions given
in \cite[2.8(20)--(27)]{4} and \cite[\S~15.5]{3}.
The f\/irst fractional generalization of one of these dif\/ferentiation formulas was given
by Bateman \cite[p.~184]{1}. Fractional generalizations of some further dif\/ferentiation
formulas were given by Askey \& Fitch \cite[Section~2]{2}. One still missing case
was partially handled by Camporesi \cite[paragraph after~(2.28)]{5}.
In this paper the full list of the eight fractional integral transformation formulas will
be given.

Another observation leading to this paper was that
{\em Euler's integral representation}
for the Gauss hypergeometric function, when written as a fractional integral
\begin{gather}
\hyp21{a,b}cx=\frac{\Ga(c)}{\Ga(b)\Ga(c-b)}x^{1-c}
\int_0^x y^{b-1}(1-y)^{-a}(x-y)^{c-b-1}\,dy\nonumber\\
\hphantom{\hyp21{a,b}cx=}{} \ (0<x<1,\;\Re c>\Re b>0),
\label{64}
\end{gather}
should have a proof by using the hypergeometric dif\/ferential equation
\cite[(2.3.5)]{8}
\begin{gather}
L_{a,b,c;z} \left(\hyp21{a,b}cz\right)=0,
\label{79}
\end{gather}
where
\begin{gather}
L_{a,b,c;z}\big(f(z)\big)=
(L_{a,b,c}f)(z):=z(1-z)f''(z)+(c-(a+b+1)z)f'(z)-abf(z),
\label{110}
\end{gather}
and that then
essentially the same proof should also yield that
\begin{gather}
f(x):=|x|^{1-c}\int_m^M |y|^{b-1}|1-y|^{-a}|x-y|^{c-b-1}\,dy
\label{76}
\end{gather}
is a solution of the hypergeometric dif\/ferential equation if $m$, $M$ and $x$ are as follows:
\\[\smallskipamount]
{\footnotesize
\centerline{\begin{tabular}
{| r || c | c | c | c | c | c | c | c | c | c |}
\hline
$(m,M)$&$(-\iy,x)$&$(-\iy,0)$&$(x,0)$&$(0,x)$&$(0,1)$&$(0,1)$&$(x,1)$&
$(1,x)$&$(1,\iy)$&$(x,\iy)$\\
\hline
$x\in$&$(-\iy,0)$&$(0,\iy)$&$(-\iy,0)$&$(0,1)$&$(-\iy,0)$&$(1,\iy)$&
$(0,1)$&$(1,\iy)$&$(-\iy,1)$&$(1,\iy)$\\
\hline
\end{tabular}}}
\\[\smallskipamount]
This means that $m$ and $M$ in~\eqref{76} are two consecutive points
of singularity of the integrand.
Indeed, we will see that in all listed cases the right-hand side
of~\eqref{76} equals a constant multiple of one of the six explicit
solutions $w_j$ \cite[\S~15.10(ii)]{3} of the hypergeometric
dif\/ferential equation.
In fact, as was Sergei Sitnik kindly commenting to me following
an earlier version of this paper, the above observations (in the
fractional integral case where~$m$ or~$M$ equals~$x$) were already
made in great detail by Letnikov~\cite{17} in~1874 in a paper
which seems to have been unobserved outside Russia.

The Euler type integral representations of fractional integral
type are specializations
of parameter changing fractional integral transforms acting
on some~$w_j$.
For instance, \eqref{64} is a~special case of {\em Bateman's
fractional integral formula} \cite[p.~184]{1}:
\begin{gather}
\int_0^x y^{c-1}
\hyp21{a,b}cy \frac{(x-y)^{\mu-1}}{\Ga(\mu)}\,dy
=\frac{\Ga(c)}{\Ga(c+\mu)} x^{c+\mu-1}\hyp21{a,b}{c+\mu}x\nonumber\\
\hphantom{\int_0^x y^{c-1}
\hyp21{a,b}cy \frac{(x-y)^{\mu-1}}{\Ga(\mu)}\,dy=}{}
(0<x<1,\;\Re c>0,\;\Re\mu>0).
\label{1}
\end{gather}
It will turn out that \eqref{1}, and all other
fractional integral formulas for hypergeometric functions
to be considered, admit a proof by using the hypergeometric
dif\/ferential equation. Here $L_{a,b,c}$, given by~\eqref{110},
occurs in so-called transmutation formulas,
for instance in connection with~\eqref{1}:
\begin{gather}
L_{a,b,c+\mu;x}\left(x^{1-c-\mu}\!\int_0^x y^{c-1} f(y)
\frac{(x-y)^{\mu-1}}{\Ga(\mu)}\,dy\right)
=x^{1-c-\mu}\!\int_0^x y^{c-1} (L_{a,b,c}f)(y)
\frac{(x-y)^{\mu-1}}{\Ga(\mu)}\,dy\nonumber\\
\hspace*{50mm} (f\in C^2([0,1)),\;x\in(0,1),\;\Re c>0,\;\Re\mu>0).
\label{56}
\end{gather}
Transmutation is a term which occurs in many meanings in science,
and even can have various meanings in mathematics, but in the sense
used here it was f\/irst considered in great detail by Lions~\cite{11}, namely
as an operator~$A$, often an integral operator, intertwining
between two dif\/ferential operators~$L_1$ and~$L_2$:
\begin{gather*}
L_1 A=A L_2.
\end{gather*}
In most examples, but not here, $L_1=d^2/dx^2$.
Then a typical example for $L_2$ would be the Bessel type
dif\/ferential operator $L_2=d^2/dx^2 + a x^{-1} d/dx$.
Lions~\cite{11} built on earlier work by J.~Delsarte (1938) and
Levitan~(1951). Many papers and books on transmutation have appeared
since then. See the surveys~\cite{21,22} by Sitnik and
references given there.

The cases of \eqref{76} where $m$ and $M$ are not equal to~$x$
are variants of the {\em generalized Stieltjes transform},
introduced by Widder~\cite[Section~8]{7}, and further developed
by many authors, see for instance references in~\cite{20, 19}.
We consider the generalized Stieltjes transform
as a transform sending~$f$ to~$g$ of the form
\begin{gather}
g(x)=\int_m^M f(y) |y-x|^{\mu-1}\,dy,
\label{75}
\end{gather}
where \looseness=-1 $(m,M)$ is $(-\iy,0)$ or $(0,1)$ or $(1,\iy)$, where
$x$ is on $\RR$ outside the integration interval, and where the
function $y\mapsto |y|^{\mu-1} f(y)$ is $L^1$ on the integration
interval.
The Euler type integral representations of this
form are special cases
of generalized Stieltjes transforms which send a solu\-tion~$w_i$ of the hypergeometric
dif\/ferential equation to a~solution~$w_j$ and change the parameters.

Some of these formulas can be found in literature, notably in
the Bateman project~\cite{15}.
There \cite[14.4(9)]{15} is essentially
the case $(m,M)=(-\iy,0)$ of~\eqref{76}, while
\cite[20.2(10)]{15} is a generalized Stieltjes transform sending a
${}_2F_1$ to a ${}_2F_1$.
A formula by
Karp \& Sitnik \cite[Lemma 2]{6} is essentially a
generalized Stieltjes transform sending a ${}_2F_1$ to a ${}_3F_2$,
which can be specialized to a transform sending ${}_2F_1$ to ${}_2F_1$.
We will give a long list of generalized Stieltjes transforms
sending some~$w_i$ to some~$w_j$, including the two from literature
just mentioned. The formulas in this list are essentially equivalent:
they can all be derived from each other.

The case $\mu=0$ of \eqref{75} is essentially the classical Stieltjes
transform. It will not change the parameters of the hypergeometric
solutions. A well-known example of this case is
\cite[(4.61.4)]{10}, which sends Jacobi polynomials to
Jacobi functions of the second kind.

The idea that formulas for hypergeometric functions can be proved
by using the hypergeometric dif\/ferential equation goes back to Riemann.
See Andrews, Askey \& Roy \cite[Sections~2.3 and~3.9]{8} how this method
can be used for a proof of Pfaf\/f's and Euler's transformation formulas
and of quadratic transformation formulas. Such methods are recently
also used by Paris and coauthors~\cite{13, 12}. They refer to
an earlier proof in this spirit by Rainville \cite[p.~126]{14}
of a~quadratic transformation formula involving a~${}_1F_1$ and
a~${}_0F_1$. In this paper the method will be applied to integral
formulas, but with a dif\/ferent focus.

Our message is that many
formulas for hypergeometric functions have a companion formula
involving the hypergeometric dif\/ferential equation, which is more
universal because it will imply or suggest many formulas involving
the various solutions $w_j$ of the hypergeometric dif\/ferential equation.
A~f\/inal rigorous proof of these formulas may not use the universal
formula (as will be often the case in the present paper), but the
universal formula is helpful for arriving at these formulas and for
organizing them.

Quite probably the ideas of this paper will also work in other situations,
for instance for Appell hypergeometric series.

The contents of this paper are as follows. After some preliminaries
for hypergeometric functions in Section \ref{103},
we illustrate in Section~\ref{104} the main ideas
of the paper for the special case of the Bateman integral~\eqref{1}.
This takes quite a few pages, but it is less technical
than the rest of the paper.
In Section~\ref{11} we state and prove the eight fractional integral
transformations corresponding to the eight $n$-th derivative formulas
for hypergeometric functions. We give also eight corresponding
transmutation formulas. In Section~\ref{98}
we discuss the 48 fractional
integration formulas for the six solutions $w_j$,
which can be obtained by rewriting the formulas in Section~\ref{11}.
Not all formulas will be given explicitly.
In Section~\ref{99} we give 24 generalized Stieltjes transforms
sending some $w_i$ to some $w_j$.
In Section~\ref{105} we give for each of the
six solutions $w_j$ two Euler type integral representations.
They can all be obtained by specialization of formulas in
Sections~\ref{98} and~\ref{99}.
Finally, Section~\ref{106} discusses the connection between
generalized Stieltjes transforms of dif\/ferent order by fractional
integration, and how this leads to connections between formulas in
Sections~\ref{98} and~\ref{99}.

\section{Preliminaries about hypergeometric functions}
\label{103}
The {\em Gauss hypergeometric function} \cite[Chapter~2]{4},
\cite[Chapter~2]{8}, \cite[Chapter~15]{3}
is def\/ined by its power series
\begin{gather}
\hyp21{a,b}cz=F(a,b;c;z)
:=\sum_{k=0}^\iy \frac{(a)_k (b)_k}{(c)_k k!} z^k
\qquad(|z|<1).
\label{38}
\end{gather}
Here the (complex) parameters $a,b,c$ are taken generically.
In particular the series should be well-def\/ined ($c\notin\ZZ_{\le0}$),
while possibly nicer results in case of terminating series
($a$ or $b\in\ZZ_{\le0}$) are not paid attention to.
The function \eqref{38} has a one-valued analytic continuation to $\CC\backslash[1,\iy)$.
It satisf\/ies the {\em hypergeometric differential equation}~\eqref{79}
and it is uniquely determined as the function regular near~0 and equal to~1
at 0 which is annihilated by $L_{a,b,c}$.

Important transformation formulas, due to Pfaf\/f and Euler, respectively,
are \cite[(2.2.6), (2.2.7)]{8}, \cite[(15.8.1)]{3}:
\begin{alignat}{3}
& \hyp21{a,b}cz =(1-z)^{-a}\hyp21{a,c-b}c{\frac z{z-1}}\qquad&&
(z\notin[1,\iy)),&
\label{61}\\
& \hyp21{a,b}cz =(1-z)^{c-a-b}\hyp21{c-a,c-b}cz\qquad && (z\notin[1,\iy)).&
\label{62}
\end{alignat}
Often, a specif\/ic formula for $F(a,b;c;\,.\,)$ trivially implies another
one by the symmetry in $a$ and $b$. For instance, when we will refer to~\eqref{61}, we may also mean a similar identity with the
right-hand side given by
$(1-z)^{-b} F(c-a,b;c;z/(z-1))$.
An elementary special case of the hypergeometric function is
\begin{gather}
\hyp21{a,b}bz=(1-z)^{-a}\qquad(z\notin[1,\iy)).
\label{63}
\end{gather}

In the case of generic parameters there are essentially six dif\/ferent
explicit solutions of the Gauss dif\/ferential equation
$L_{a,b,c}f=0$
\cite[Section~2.9]{4}, \cite[\S~15.10(ii)]{3}. These are one-valued
analytic functions on the complex plane with a suitable real interval as
cut:
\begin{alignat}{3}
& w_1(z;a,b,c):=\hyp21{a,b}cz\qquad && (z\notin[1,\iy)),\hspace*{-10mm}&
\label{48}\\
& w_2(z;a,b,c) :=z^{1-c}\hyp21{a-c+1,b-c+1}{2-c}z\quad &&
(z\notin(-\iy,0]\cup[1,\iy)),\hspace*{-10mm}&
\label{49}\\
& w_3(z;a,b,c) :=z^{-a}\hyp21{a,a-c+1}{a-b+1}{z^{-1}}\qquad &&
(z\notin(-\iy,1]),\hspace*{-10mm}&
\label{50}\\
& w_4(z;a,b,c) :=z^{-b}\hyp21{b,b-c+1}{b-a+1}{z^{-1}}\qquad &&
(z\notin(-\iy,1]),\hspace*{-10mm} &
\label{51}\\
& w_5(z;a,b,c) :=\hyp21{a,b}{a+b-c+1}{1-z}\qquad && (z\notin(-\iy,0]),\hspace*{-10mm}&
\label{52}\\
& w_6(z;a,b,c) :=(1-z)^{c-a-b}\hyp21{c-a,c-b}{c-a-b+1}{1-z}\qquad &&
(z\notin(-\iy,0]\cup[1,\iy)).\hspace*{-10mm}&
\label{53}
\end{alignat}
Here we had to exclude not just the branch cuts of the~${}_2F_1$'s, but
also those of the power factors
(we assume principal values for the complex powers).
In the case of~$w_2$, $w_3$, $w_4$, $w_6$ we might have chosen the
branch cuts due to the power factors dif\/ferently. For instance, a
companion of~$w_2$ would be
\begin{gather*}
\widetilde w_2(z;a,b,c):=(-z)^{1-c}\hyp21{a-c+1,b-c+1}{2-c}z\qquad
(z\notin[0,\iy)).
\end{gather*}

We will also consider the six solutions on subintervals of the real axis
including the branch cuts of the power factors as below (here we abuse notation by not changing
it compared to above)
\begin{alignat}{3}
& w_1(x;a,b,c)=\hyp21{a,b}cx  && (x\in(-\iy,1)),\hspace*{-10mm}&
\label{15}\\
& w_2(x;a,b,c) =|x|^{1-c}\hyp21{a-c+1,b-c+1}{2-c}x  &&
(x\in(-\iy,0)\cup(0,1)),\hspace*{-10mm}&
\label{44}\\
& w_3(x;a,b,c) =|x|^{-a}\hyp21{a,a-c+1}{a-b+1}{x^{-1}} &&
(x\in(-\iy,0)\cup(1,\iy)),\hspace*{-10mm}&
\label{45}\\
& w_4(x;a,b,c) =|x|^{-b}\hyp21{b,b-c+1}{b-a+1}{x^{-1}}  &&
(x\in(-\iy,0)\cup(1,\iy)),\hspace*{-10mm}&
\label{46}\\
& w_5(x;a,b,c) =\hyp21{a,b}{a+b-c+1}{1-x}  && (x\in(0,\iy)),\hspace*{-10mm}&
\label{47}\\
& w_6(x;a,b,c) =|1-x|^{c-a-b}\hyp21{c-a,c-b}{c-a-b+1}{1-x} \qquad  &&
(x\in(0,1)\cup(1,\iy)).\hspace*{-10mm}&
\label{16}
\end{alignat}

\section{The main ideas illustrated in a special case}
\label{104}

\subsection[Transmutation property of a differentiation operator]{Transmutation property of a dif\/ferentiation operator}

In \cite[2.8(20)--(27)]{4} or
\cite[\S~15.5]{3} there is a list of eight parameter changing
$n$-th derivative formulas for Gauss hypergeometric functions.
One of these is \cite[(15.5.4)]{3}:
\begin{gather}
\left(\frac d{dz}\right)^n\left(z^{c-1} \hyp21{a,b}cz\right)
=\frac{\Ga(c)}{\Ga(c-n)}\,z^{c-n-1}\hyp21{a,b}{c-n}z.
\label{39}
\end{gather}
Formula \eqref{39} can be proved immediately by power series expansion~\eqref{38}. Note also that the $n$-th derivative case follows by iteration of
the case $n=1$. The case $n=1$ can be rewritten as
\begin{gather}
D_{c-1} \hyp21{a,b}c{\,.\,}=(c-1) \hyp21{a,b}{c-1}{\,.\,},
\label{40}
\end{gather}
where
\begin{gather}
(D_a f)(z):=z f'(z)+a f(z)=z^{-a+1} \frac d{dz}\big(z^a f(z)\big).
\end{gather}
Clearly, if $f$ is analytic at 0 then $D_af$ is analytic at 0 and
$(D_a f)(0)=a f(0)$.

Straightforward computation followed by iteration gives the following
transmutation pro\-perty:
\begin{gather}
L_{a,b,c-1} D_{c-1} =D_{c-1} L_{a,b,c},
\label{41}\\
L_{a,b,c-n} D_{c-n}\cdots D_{c-2} D_{c-1} =
D_{c-n}\cdots D_{c-2} D_{c-1} L_{a,b,c}.
\label{42}
\end{gather}
Formula \eqref{40} is also a consequence of \eqref{41}.
Indeed, by \eqref{41}  $L_{a,b,c-1}$ annihilates the left-hand side
of \eqref{40}.
Since this left-hand side is regular at~0, it must be a constant times
$F(a,b;c-1;\,.\,)$ with the constant obtained by evaluating both sides
at~0.
Similarly, \eqref{39} is a consequence of~\eqref{42}.

Because of the above argument, it is natural to consider
$D_{c-1} w_j(\,.\,;a,b,c)$ not just for $j=1$ but also for the other
explicit solutions of the hypergeometric dif\/ferential equation
($j=2,\ldots,6$). It will turn out that for all~$j$ we get
some constant factor times $w_j(\,.\,;a,b,c-1)$. For instance,
with $w_2$ given by \eqref{49},
\begin{gather}
D_{c-1} w_2(\,.\,;a,b,c)=\frac{(a-c+1)(b-c+1)}{2-c} w_2(\,.\,;a,b,c-1).
\label{59}
\end{gather}
This is equivalent to the prototypical dif\/ferentiation formula for
the hypergeometric function
\cite[(15.5.4)]{1}:
\begin{gather}
\frac d{dz} \hyp21{a,b}cz=\frac{ab}c \hyp21{a+1,b+1}{c+1}z.
\label{60}
\end{gather}

Corresponding to each of the eight $n$-th derivative formulas
\cite[(15.5.2)--(15.5.9)]{3} one can write down a transmutation formula
like~\eqref{42}, and corresponding to each of these transmutation
formulas one can write down an $n$-th derivative formula for each of
the six solutions~$w_j$. These will not be included in the present paper. The transmutation formulas for the f\/irst derivative operators
were earlier given by Derezi\'nski \cite[end of Section~3.4]{23}.
He calls them commutation relations and he relates them to
to the Lie algebra~$\mathfrak{so}(6)$ (or equivalently~$\mathfrak{sl}(4)$).
This way to associate a Lie algebra (the so-called dynamical
symmetry algebra) with a hypergeometric dif\/ferential equation
was f\/irst described by Miller~\cite{24}.
This was later understood in the framework of $\cal A$-hypergeometric
systems, see Saito~\cite{25}.

\begin{Remark}
The transformation formulas \eqref{61} and \eqref{62} also have a more universal background
because they can be understood from transformation properties of the dif\/ferential ope\-ra\-tor~$L_{a,b,c}$
(see also \cite[(2.3.10B) and (2.3.10F)]{8}):
\begin{gather}
 L_{a,b,c;z}\left((1-z)^{-a} f\left(\frac z{z-1}\right)\right)
=-(1-z)^{-a-1}\big(L_{a,c-b,c}f\big)\left(\frac z{z-1}\right),\\
 L_{a,b,c;z}\big((1-z)^{c-a-b} f(z)\big)
=(1-z)^{c-a-b} (L_{c-a,c-b,c}f)(z).
\end{gather}
This suggests identities involving $w_i(z;a,b,c)$,
$(1-z)^{-a} w_j(\frac z{z-1};a,c-b,c)$,
$(1-z)^{-b} w_k(\frac z{z-1};$ $c-a,b,c)$ and $(1-z)^{c-a-b} w_l(z;c-a,c-b,c)$.
These can indeed be given, but
sometimes one needs another branch cut for the power factor than chosen
in \eqref{48}--\eqref{53}.
\end{Remark}

\subsection{Transmutation property of a fractional integral operator}
\label{74}
Both Riemann--Liouville and Weyl fractional integrals will occur in
our formulas. See, for instance, \cite[pp.~181--183]{15} or~\cite{16}.

Formula \eqref{39} can equivalently be written as a repeated integral,
which can be condensed to a single integral \cite[p.~111]{8}. For this purpose, take
$0<x<1$ and assume $\Re c>n$. We obtain
\begin{gather}
x^{c-1} \hyp21{a,b}cx=\frac{\Ga(c)}{\Ga(c-n)}
\int_0^x y^{c-n-1} \hyp21{a,b}{c-n}y \frac{(x-y)^{n-1}}{(n-1)!}\,dy.
\label{43}
\end{gather}
Note that there are no other terms because, by our assumption,
$y^{c-m}$ vanishes at $y=0$ for $m=1,\ldots,n$.
Formula~\eqref{43} has a fractional extension for~$n$ complex with
$\Re c>\Re n>0$. In rewritten form this  is Bateman's fractional integral formula~\eqref{1}, which is
often written in the form
\begin{gather}
\int_0^1 t^{c-1}
\hyp21{a,b}c{tz} \frac{(1-t)^{\mu-1}}{\Ga(\mu)}\,dt
=\frac{\Ga(c)}{\Ga(c+\mu)} \hyp21{a,b}{c+\mu}z\nonumber\\
\hphantom{\int_0^1 t^{c-1}
\hyp21{a,b}c{tz} \frac{(1-t)^{\mu-1}}{\Ga(\mu)}\,dt
=}{} \
(z\in\CC\backslash[1,\iy),\;\Re c>0,\;\Re\mu>0).
\label{8}
\end{gather}
Then \eqref{1} follows from \eqref{8} by specializing $z$ to $x\in(0,1)$
and substituting~$t=y/x$.
By analytic continuation it is suf\/f\/icient to prove \eqref{8} for $|z|<1$.
There it follows by power series expansion~\eqref{38}.
For~\eqref{8} see  also \cite[(2.4)]{2}, \cite[(3.5)]{9} and
\cite[Theorem~2.2.4, (2.9.6)]{8}.

Just as we proved \eqref{40} by~\eqref{41}, we can prove~\eqref{1}
from the following identity (derived by straightforward computation):
\begin{gather}
y^{1-c} L_{1-a,1-b,2-c;y}\big(y^{c-1}(x-y)^{\mu-1}\big)
=x^{c+\mu-1} L_{a,b,c+\mu;x}\big(x^{1-c-\mu}(x-y)^{\mu-1}\big).
\label{54}
\end{gather}
We also need (straightforward by integration by parts):

\begin{Lemma}
\label{55}
Let $f,g\in C^2((m,M))$. Then, for $x\in(m,M)$,
\begin{gather*}
(L_{a,b,c}f)(x) g(x)-f(x) (L_{1-a,1-b,2-c}g)(x)\\
\qquad {}=\frac d{dx}\big(x(1-x)\big(f'(x)g(x)-f(x)g'(x)\big)+
\big(c-1+(1-a-b)x\big)f(x)g(x)\big).
\end{gather*}
Furthermore, let
$x(1-x)f''(x)g(x)$,
$x(1-x)f(x)g''(x)$,
$(1+|x|)f'(x)g(x)$,
$(1+|x|)f(x)g'(x)$,
$f(x)g(x)$,
as functions of $x$, be in $L^1((m,M))$, and assume that
\begin{gather*}
\left.\begin{matrix}
x(1-x)f'(x)g(x)\\[\smallskipamount]
x(1-x)f(x)g'(x)\\[\smallskipamount]
(1+|x|)f(x)g(x)
\end{matrix}\right\}
\to0\quad\mbox{\rm as $x\downarrow m$ or $x\uparrow M$.}
\end{gather*}
Then
\begin{gather}
\int_m^M (L_{a,b,c}f)(x) g(x)\,dx
=\int_m^M f(x) (L_{1-a,1-b,2-c}g)(x)\,dx.
\label{109}
\end{gather}
\end{Lemma}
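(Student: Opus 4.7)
The plan is to prove the pointwise identity by direct verification, and then deduce the integrated version by integrating from $m$ to $M$ and using the decay hypotheses to kill the boundary contributions.

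For the pointwise identity, I would simply expand both sides. On the left, using the explicit form \eqref{110}, one has
\begin{gather*}
(L_{a,b,c}f)g - f(L_{1-a,1-b,2-c}g) = x(1-x)(f''g-fg'') + (c-(a+b+1)x)f'g \\
\hspace{2cm}{}- ((2-c)-(3-a-b)x)fg' + \bigl((1-a)(1-b)-ab\bigr)fg,
\end{gather*}
and the last coefficient simplifies to $1-a-b$. On the right, applying the product rule to $x(1-x)(f'g-fg')+(c-1+(1-a-b)x)fg$ gives
\begin{gather*}
(1-2x)(f'g-fg') + x(1-x)(f''g-fg'') + (1-a-b)fg + (c-1+(1-a-b)x)(f'g+fg').
\end{gather*}
Collecting the coefficients of $f'g$ yields $(1-2x)+(c-1+(1-a-b)x) = c-(a+b+1)x$, and the coefficient of $fg'$ yields $-(1-2x)+(c-1+(1-a-b)x) = -((2-c)-(3-a-b)x)$. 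The $f''g$, $fg''$, and $fg$ terms match by inspection. So both sides agree; this is the only nontrivial computation, and I expect it to be entirely routine, with the main risk being a sign slip in keeping track of the adjoint parameters $(1-a,1-b,2-c)$.

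For the integrated identity \eqref{109}, I would integrate the pointwise identity over $(m,M)$. The left-hand side is in $L^1((m,M))$ because, by hypothesis, each of $x(1-x)f''(x)g(x)$, $x(1-x)f(x)g''(x)$, $(1+|x|)f'(x)g(x)$, $(1+|x|)f(x)g'(x)$, $f(x)g(x)$ lies in $L^1((m,M))$, and the polynomial coefficients in $L_{a,b,c}$ and $L_{1-a,1-b,2-c}$ are dominated by these. The right-hand side is a total derivative, so by the fundamental theorem of calculus its integral equals the boundary values of $x(1-x)(f'g-fg')+(c-1+(1-a-b)x)fg$ at $M$ and $m$. The stated decay hypotheses force $x(1-x)f'(x)g(x)$, $x(1-x)f(x)g''(x)$ (here replaced by $x(1-x)f(x)g'(x)$), and $(1+|x|)f(x)g(x)$ to vanish as $x\downarrow m$ or $x\uparrow M$, so both boundary terms are zero, and \eqref{109} follows.

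The only real obstacle is keeping the algebra straight when verifying the pointwise identity; everything else is bookkeeping with the dominated convergence-style hypotheses already built into the statement.
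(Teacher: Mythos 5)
Your proposal is correct and takes essentially the same route as the paper, which offers no written proof beyond the remark that the lemma is ``straightforward by integration by parts'': verifying the pointwise Lagrange-type identity by expansion and then integrating, using the $L^1$ hypotheses to split the integral and the decay hypotheses to kill the boundary values of $x(1-x)(f'g-fg')+(c-1+(1-a-b)x)fg$, is exactly that computation. The algebra you outline checks out (in particular $(1-a)(1-b)-ab=1-a-b$ and the $f'g$, $fg'$ coefficients match), apart from the harmless slip where you wrote $x(1-x)f(x)g''(x)$ for the boundary term before correcting it to $x(1-x)f(x)g'(x)$.
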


In particular, $L_{1-a,1-b,2-c}$ is the formal adjoint of $L_{a,b,c}$.

\begin{proof}[Proof of \eqref{1} by \eqref{54}]
First assume $0<x<1$, $\Re c>1$, $\Re\mu>2$,
We have
\begin{gather*}
 L_{a,b,c+\mu;x}\big(x^{1-c-\mu}\times
\mbox{left-hand side of \eqref{1}}\big)\\
\qquad{}=\frac1{\Ga(\mu)}\int_0^x y^{c-1} \hyp21{a,b}cy
L_{a,b,c+\mu;x}\big(x^{1-c-\mu}(x-y)^{\mu-1}\big)\,dy\\
\qquad{}=\frac{x^{1-c-\mu}}{\Ga(\mu)}\int_0^x \hyp21{a,b}cy
L_{1-a,1-b,2-c;y}\big(y^{c-1}(x-y)^{\mu-1}\big)\,dy\\
\qquad{}=\frac{x^{1-c-\mu}}{\Ga(\mu)}\int_0^x
L_{a,b,c;y}\left(\hyp21{a,b}cy\right) y^{c-1} (x-y)^{\mu-1}\,dy=0.
\end{gather*}
For the f\/irst equality sign we twice used a generalized Leibniz rule
\cite[(1.1)]{26} (allowed because the integral in the second line
converges absolutely) together with the vanishing of the integrand of
the left-hand side of~\eqref{1}
for $y=x$ (and similarly for the derivative with respect to $x$ of that integrand)\footnote{The involved dif\/ferentiation
under the integral sign got some fame as ``Feynman's trick'',
see \url{https://en.wikipedia.org/wiki/Differentiation_under_the_integral_sign\#Popular_culture}.}.
For the second equality apply~\eqref{54} and for the third equality
\eqref{109} (the conditions are satisf\/ied).
Because
\begin{gather*}
x^{1-c-\mu}\times\mbox{left-hand side of \eqref{1}}=
\int_0^1 t^{c-1} \hyp21{a,b}c{tx} \frac{(1-t)^{\mu-1}}{\Ga(\mu)}\,dt,
\end{gather*}
this is analytic in $x$ at $x=0$, and for $x=0$ it takes the value
\begin{gather*}
\int_0^1 t^{c-1} \frac{(1-t)^{\mu-1}}{\Ga(\mu)}\,dt=
\frac{\Ga(c)}{\Ga(c+\mu)}.
\end{gather*}
Thus, by the characterization of the hypergeometric function as solution
of the hypergeometric dif\/ferential equation, we have proved~\eqref{54}
for $\Re\mu>2$, $\Re c>1$. These conditions can be relaxed by analytic
continuation.
\end{proof}

If we replace in the above proof the ${}_2F_1$ by a suitable function~$f$
then we obtain the transmutation property~\eqref{56}.

\begin{Remark}
For $\Re\mu>2$ \eqref{56} is equivalent to~\eqref{54}.
For $\mu=3,4,\ldots$ \eqref{56} is also equivalent to~\eqref{42}.
Furthermore, it is suf\/f\/icient to prove~\eqref{54} for $\mu=3,4,\ldots$
because, after division of both sides by~$(x-y)^{\mu-1}$, the two sides
depend polynomially on~$\mu$.
\end{Remark}

Formally, the proof of \eqref{56} can be extended to showing that
\begin{gather}
L_{a,b,c+\mu;x}\left(|x|^{1-c-\mu}\int_m^M |y|^{c-1} f(y)
\frac{|x-y|^{\mu-1}}{\Ga(\mu)}\,dy\right)\nonumber\\
\qquad{}=|x|^{1-c-\mu}\int_m^M |y|^{c-1} (L_{a,b,c}f)(y)
\frac{|x-y|^{\mu-1}}{\Ga(\mu)}\,dy.
\label{57}
\end{gather}
Here $m$ or $M$ may be equal to $x$, but not necessarily, and
$y$ and $x-y$ should not change sign for $y\in[m,M)$.
This becomes rigorous if $f$, $c$ and~$\mu$ are such that the f\/irst and third equality (suitably
modif\/ied) in the above Proof remain valid in the case of~\eqref{57}.
In particular, $f$ should have suitable vanishing properties at~$m$ and~$M$,
even stronger if~$m$ or~$M$ are not equal to~$0$ or~$x$.

If \eqref{57} holds and if $L_{a,b,c} f=0$ then
\begin{gather}
L_{a,b,c+\mu} g=0,\qquad
g(x):=|x|^{1-c-\mu}\int_m^M |y|^{c-1} f(y)
\frac{|x-y|^{\mu-1}}{\Ga(\mu)}\,dy.
\label{58}
\end{gather}
In this paper many explicit cases of \eqref{58} will be given, where
$f=w_i$ for some $i$ and $g=\const\cdot  w_j$ for some~$j$, with $j=i$ if~$m$ or~$M$ equals~$x$. As an example it can be derived that
\begin{gather}
\int_{-\iy}^x (-y)^{c-1} w_2(y;a,b,c) \frac{(x-y)^{\mu-1}}{\Ga(\mu)}\,dy
=\frac{\Ga(a-c-\mu+1)}{\Ga(a-c+1)} \frac{\Ga(b-c-\mu+1)}{\Ga(b-c+1)}
\frac{\Ga(2-c)}{\Ga(2-c-\mu)}\nonumber\\
\hphantom{\int_{-\iy}^x (-y)^{c-1} w_2(y;a,b,c) \frac{(x-y)^{\mu-1}}{\Ga(\mu)}\,dy=}{}
\times (-x)^{c+\mu-1} w_2(x;a,b,c+\mu)\\
\hspace*{50mm}{}
(x<0,\; \Re(a-c-\mu+1),\Re(b-c-\mu+1),\Re\mu>0).\nonumber
\end{gather}
This is the fractional generalization of the iteration of~\eqref{59}.
It can be equivalently written as
\begin{gather}
\int_{-\iy}^x \hyp21{a,b}cy\frac{(x-y)^{\mu-1}}{\Ga(\mu)}\,dy
=\frac{\Ga(a-\mu)}{\Ga(a)} \frac{\Ga(b-\mu)}{\Ga(b)}
\frac{\Ga(c)}{\Ga(c-\mu)} \hyp21{a-\mu,b-\mu}{c-\mu}x\nonumber\\
\hphantom{\int_{-\iy}^x \hyp21{a,b}cy\frac{(x-y)^{\mu-1}}{\Ga(\mu)}\,dy=}{}
\ (x<0,\;\Re a,\; \Re b>\Re\mu>0).
\label{68}
\end{gather}
This is the fractional generalization of the iteration of~\eqref{60}.
A dif\/ferent proof will be given in the next section.

Another explicit case of~\eqref{58} which we will meet is
\begin{gather}
\int_1^\iy y^{c-1} w_6(y;a,b,c) (y-x)^{\mu-1}\,dy\nonumber\\
\qquad {}
=\frac{\Ga(a-c-\mu+1) \Ga(b-c-\mu+1) \Ga(c-a-b+1)}{\Ga(2-c-\mu) \Ga(1-\mu)}
 x^{c+\mu-1} w_2(x;a,b,c+\mu)\nonumber\\
\qquad \quad (0<x<1,\; \Re(a-c-\mu+1),\; \Re(b-c-\mu+1),\; \Re(c-a-b+1)>0).
\label{69}
\end{gather}
The left-hand side is no longer of fractional integral type,
but it is a generalized
Stieltjes transform, to which we will return in a moment.

\subsection{Euler type integral representations}
When we replace $c$, $\mu$ by $b$, $c-b$ in~\eqref{1} or~\eqref{8} and use~\eqref{63} then we
obtain Euler's integral representation as fractional integral~\eqref{64}, or in its most used form
\begin{gather}
\hyp21{a,b}cz=\frac{\Ga(c)}{\Ga(b)\Ga(c-b)}
\int_0^1 t^{b-1} (1-t)^{c-b-1} (1-tz)^{-a}\,dt\nonumber\\
\hphantom{\hyp21{a,b}cz=}{} \
(z\in\CC\backslash[1,\iy),\;\Re c>\Re b>0).
\label{65}
\end{gather}
The right-hand side of \eqref{64}
is annihilated by $L_{a,b,c;x}$.
This is a consequence of the transmutation property~\eqref{56},
by which $L_{a,b,c;x}$ acting on the right-hand side
of~\eqref{64} is equal to
\begin{gather*}
\const \cdot x^{1-c}\int_0^x y^{b-1} L_{a,b,b;y}\big((1-y)^{-a}\big)
(x-y)^{c-b-1}\,dy=0
\end{gather*}
because $(1-y)^{-a}$ is annihilated by $L_{a,b,b;y}$. This last fact
is also apparent from
\begin{gather*}
L_{a,b,b;y}=\left(y\frac d{dy}+b\right)\circ\left((1-y)\frac d{dy}-a\right).
\end{gather*}

The more general transmutation property \eqref{57}, considered with
$c,\mu$ replaced by~$b$,~$c-b$, suggests that each~$w_j$
has an Euler type integral representation~\eqref{76}.
This is indeed the case, as we already brief\/ly indicated after~\eqref{76}.

We can write \eqref{65} also equivalently as a generalized Stieltjes
transform
\begin{gather}
w_3(x;a,b,c)=\frac{\Ga(a-b+1)}{\Ga(a-c+1)\Ga(c-b)}
\int_0^1 y^{a-c}(1-y)^{c-b-1}(x-y)^{-a}\,dy\nonumber\\
\hphantom{w_3(x;a,b,c)=}{} \
(x>1,\;\Re(a+1)>\Re c>\Re b).
\label{73}
\end{gather}
It will turn out that, more generally and analogous to \eqref{76},
\begin{gather}
g(x)=\int_m^M |y|^{a-c} |1-y|^{c-b-1} |x-y|^{-a}\,dy
\label{85}
\end{gather}
is a solution of the hypergeometric dif\/ferential equation
if~$m$,~$M$ and~$x$ are as listed after~\eqref{76}.

Although most Euler type integral representations of fractional
integral type are equivalent to some integral representation
of generalized Stieltjes transform type by a change of integration
variable, this is no longer true for the integral transforms mapping~$w_i$ to~$w_j$ which specialize to an Euler type
integral representation.

\subsection{Generalized Stieltjes transforms as transmutation operators}

We already observed in Section~\ref{81} that variants of
fractional integral transforms and the Euler integral representation
for hypergeometric functions naturally lead to formulas
involving a~ge\-neralized Stieltjes transform.
In the def\/inition by
Widder~\cite[Section 8]{7} the generalized Stieltjes transform
sends a suitable measure~$\alpha$ or function~$\phi$
(with $d\alpha(t)=\phi(t)\,dt$)
to a function $f$ analytic on $\CC\backslash(-\iy,0]$:
\begin{gather*}
\int_0^\iy \frac{d\alpha(t)}{(z+t)^\rho}=f(z).
\end{gather*}
The special case $\rho=1$ gives the classical Stieltjes transform.
In order to have analytic expressions similar to the ones in fractional
integral transforms, we will work with transforms~\eqref{75}.

Transforms of
generalized Stieltjes type have transmutation properties.
For instance, from \eqref{57} we have, associated with~\eqref{69},
the intertwining property
\begin{gather*}
L_{a,b,c+\mu;x}\left(x^{1-c-\mu}\int_1^\iy y^{c-1} f(y)
(y-x)^{\mu-1}\,dy\right)\\
\qquad {} =x^{1-c-\mu}\int_1^\iy y^{c-1} (L_{a,b,c}f)(y)
(y-x)^{\mu-1}\,dy\qquad(0<x<1).
\end{gather*}
Noteworthy is the case $\mu=0$. Then we have the same hypergeometric dif\/ferential operator on both sides. The Stieltjes transform
will then map solutions of the dif\/ferential equation to other solutions.

Karp \& Sitnik \cite[Lemma~2]{6} proved the following formula:
\begin{gather}
\int_0^1 t^{b-1}(1-t)^{d+e-b-c-1}
\hyp21{d-c,e-c}{d+e-b-c}{1-t}(1-tz)^{-a}\,dt\nonumber\\
\qquad{}
=\frac{\Ga(b)\Ga(c)\Ga(d+e-b-c)}{\Ga(d)\Ga(e)} \hyp32{a,b,c}{d,e}z\nonumber\\
\qquad \quad \ (z\in\CC\backslash[1,\iy),\;\Re b, \Re c,\Re(d+e-b-c)>0).
\label{14}
\end{gather}
If we replace $z$ by $z^{-1}$ then we recognize the formula as a generalized
Stieltjes transform sending a ${}_2F_1$ to a ${}_3F_2$ of general parameters. (For more general formulas expressing
${}_pF_{p-1}$ functions as generalized Stieltjes transforms see~\cite{19} and references given there.)
If moreover $d=a$  the transform in~\cite{14}
sends a ${}_2F_1$ to a ${}_2F_1$:
\begin{gather}
\int_0^1 t^{b-1}(1-t)^{a+e-b-c-1}
\hyp21{a-c,e-c}{a+e-b-c}{1-t}(z-t)^{-a}\,dt\nonumber\\
\qquad {}
=\frac{\Ga(b)\Ga(c)\Ga(a+e-b-c)}{\Ga(a)\Ga(e)} z^{-a} \hyp21{b,c}e{z^{-1}}\nonumber\\
\qquad \quad \ (z\in\CC\backslash(-\iy,1],\;\Re b,\Re c,\Re(d+e-b-c)>0).
\label{86}
\end{gather}
All generalized Stieltjes transforms mapping a solution $w_i$ to a solution~$w_j$ we know can be obtained from~\eqref{86} by change of parameters,
change of integration variable, and application of~\eqref{61}
and~\eqref{62}.

\section[The eight fractional integral transformations of the Gauss
hypergeometric function]{The eight fractional integral transformations\\ of the Gauss
hypergeometric function}\label{11}

Each of the eight $n$-th derivative formulas in \cite[\S~15.5]{3}
has a fractional generalization. Some of these are very well-known, but
others were hardly known until now. They fall apart into three families.
Within a family the formulas follow from each other by application of~\eqref{61} or~\eqref{62}. We will use shorthand names for the eight
cases
of which the meaning will be obvious. The division of the cases into
families and their correspondence with the $n$-th derivative formulas
is as follows:
\\[\smallskipamount]
\centerline{\begin{tabular}{| l || l  l  l | l  l | l  l  l |}
\hline
family&I&I&I&II&II&III&III&III\\
\hline
case&$c+$&$a+$, $c+$& $a+$, $b+$, $c+$ & $a-$& $a+$ & $a-$, $b-$, $c-$&$a-$, $c-$&$c-$\\
\hline
\cite[\S~15.5]{3}&(4)&(8)&(9)&(3)&(5)&(1)&(7)&(6)\\
\hline
\end{tabular}}

\subsection{Family I}
\paragraph{Case $\boldsymbol{c+}$.}
We already discussed Bateman's fractional integral formula~\eqref{1}.
From~\eqref{8} we get another variant of~\eqref{1}:
\begin{gather*}
\int_x^0 (-y)^{c-1}
\hyp21{a,b}cy \frac{(y-x)^{\mu-1}}{\Ga(\mu)}\,dy
=\frac{\Ga(c)}{\Ga(c+\mu)} (-x)^{c+\mu-1}\hyp21{a,b}{c+\mu}x \\
\hphantom{\int_x^0 (-y)^{c-1}
\hyp21{a,b}cy \frac{(y-x)^{\mu-1}}{\Ga(\mu)}\,dy
=} \ (x<0,\;\Re c>0,\;\Re\mu>0),
\end{gather*}
which we can write together with \eqref{1} in a unif\/ied way as:
\begin{gather}
\int_{0<y/x<1} |y|^{c-1}
\hyp21{a,b}cy \frac{|x-y|^{\mu-1}}{\Ga(\mu)}\,dy
=\frac{\Ga(c)}{\Ga(c+\mu)}|x|^{c+\mu-1}\hyp21{a,b}{c+\mu}x\nonumber\\
\qquad (x\in(-\iy,0)\cup(0,1),\;\Re c>0,\;\Re\mu>0).
\label{10}
\end{gather}

\paragraph{Case $\boldsymbol{a+}$, $\boldsymbol{c+}$.}
By using~\eqref{61} in~\eqref{10}
we arrive at
\begin{gather}
\int_{0<y/x<1} |y|^{c-1}(1-y)^{b-c-\mu}
\hyp21{a,b}c y \frac{|x-y|^{\mu-1}}{\Ga(\mu)}\,dy\nonumber\\
\qquad {}
=\frac{\Ga(c)}{\Ga(c+\mu)} |x|^{c+\mu-1}(1-x)^{b-c}\hyp21{a+\mu,b}{c+\mu}x\nonumber\\
\qquad \quad \ (x\in(-\iy,0)\cup(0,1),\;\Re c>0,\;\Re\mu>0).
\end{gather}

\paragraph{Case $\boldsymbol{a+}$, $\boldsymbol{b+}$, $\boldsymbol{c+}$.}
By using~\eqref{62} in~\eqref{10} we arrive at
\begin{gather}
\int_{0<y/x<1} |y|^{c-1}(1-y)^{a+b-c}
\hyp21{a,b}c y \frac{|x-y|^{\mu-1}}{\Ga(\mu)}\,dy\nonumber\\
\qquad {} =\frac{\Ga(c)}{\Ga(c+\mu)} |x|^{c+\mu-1}(1-x)^{a+b-c+\mu} \hyp21{a+\mu,b+\mu}{c+\mu}x\nonumber\\
\qquad \quad(x\in(-\iy,0)\cup(0,1),\;\Re c>0,\;\Re\mu>0).
\label{7}
\end{gather}
See also Askey \& Fitch \cite[(2.11)]{2}.

\subsection{Family~II}
\paragraph{Case $a-$.}
Askey \& Fitch \cite[(2.10)]{2} give
\begin{gather}
\int_0^1 t^{a-\mu-1}
\hyp21{a,b}c{zt}\frac{(1-t)^{\mu-1}}{\Ga(\mu)}\,dt
=\frac{\Ga(a-\mu)}{\Ga(a)} \hyp21{a-\mu,b}cz\nonumber\\
\qquad (z\notin(1,\iy),\;\Re a>\Re\mu>0).
\label{70}
\end{gather}
The proof is as for \eqref{8}. For $|z|<1$ the formula follows by power
series expansion~\eqref{38}, and next the general case
follows by analytic
continuation. Formula~\eqref{70} implies the following fractional
integral formula:
\begin{gather}
\int_{0<y/x<1} |y|^{a-\mu-1}
\hyp21{a,b}cy \frac{|x-y|^{\mu-1}}{\Ga(\mu)}\,dy
=\frac{\Ga(a-\mu)}{\Ga(a)} |x|^{a-1}\hyp21{a-\mu,b}cx\nonumber\\
\qquad (x\in(-\iy,0)\cup(0,1),\;\Re a>\Re\mu>0).
\label{2}
\end{gather}

\paragraph{Case $\boldsymbol{a+}$.}
By using \eqref{61} or \eqref{62} in \eqref{2} we arrive at
\begin{gather}
\int_{0<y/x<1} |y|^{c-a-\mu-1}(1-y)^{a+b-c}
\hyp21{a,b}cy \frac{|x-y|^{\mu-1}}{\Ga(\mu)}\,dy\nonumber\\
\qquad {} =\frac{\Ga(c-a-\mu)}{\Ga(c-a)}  |x|^{c-a-1} (1-x)^{a+b-c+\mu}\hyp21{a+\mu,b}cx\nonumber\\
\qquad\quad \ (x\in(-\iy,0)\cup(0,1),\;\Re(c-a)>\Re\mu>0).
\label{5}
\end{gather}

\subsection{Family III}
\paragraph{Case $\boldsymbol{a-}$, $\boldsymbol{b-}$, $\boldsymbol{c-}$.}
The following generalizes a formula of Camporesi
\cite[paragraph after~(2.28)]{5}.

\begin{Proposition}
We have
\begin{gather}
\int_{-\iy}^x \hyp21{a,b}cy\frac{(x-y)^{\mu-1}}{\Ga(\mu)}\,dy
=\frac{\Ga(a-\mu)}{\Ga(a)} \frac{\Ga(b-\mu)}{\Ga(b)}
\frac{\Ga(c)}{\Ga(c-\mu)} \hyp21{a-\mu,b-\mu}{c-\mu}x\nonumber\\
\qquad (x<1,\;\Re a,\Re b>\Re\mu>0).
\label{4}
\end{gather}
\end{Proposition}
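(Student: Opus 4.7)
The plan is to reduce both sides of~\eqref{4} to applications of Euler's integral representation~\eqref{65}. Under the temporary assumption $\Re c>\Re b>0$ (to be removed by analytic continuation in $c$ at the end), I would substitute
\begin{gather*}
\hyp21{a,b}cy=\frac{\Ga(c)}{\Ga(b)\Ga(c-b)}\int_0^1 t^{b-1}(1-t)^{c-b-1}(1-ty)^{-a}\,dt
\end{gather*}
into the left-hand side of~\eqref{4}, change variable by $y=x-s$, and interchange the $t$- and $s$-integrations by Fubini. The inner $s$-integral
\begin{gather*}
\int_0^\iy (1-tx+ts)^{-a}\,\frac{s^{\mu-1}}{\Ga(\mu)}\,ds
\end{gather*}
collapses, via the substitution $w=ts/(1-tx)$ (legitimate since $1-tx>0$ for $t\in[0,1]$ and $x<1$), to a pure Beta integral and equals $\Ga(a-\mu)\Ga(a)^{-1}(1-tx)^{\mu-a}t^{-\mu}$ provided $\Re a>\Re\mu>0$. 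The remaining $t$-integral is then itself of Euler form~\eqref{65}, but with the shifted parameters $(a-\mu,b-\mu,c-\mu)$, and produces $\Ga(b-\mu)\Ga(c-b)/\Ga(c-\mu)$ times $\hyp21{a-\mu,b-\mu}{c-\mu}x$; the hypothesis $\Re b>\Re\mu$ is precisely what is needed to make $\Re(b-\mu)>0$. Collecting the prefactors recovers the right-hand side of~\eqref{4}.

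Two items require care. First, Fubini's theorem: a scaling $u=t|y|$ shows that, as $y\to-\iy$,
\begin{gather*}
\int_0^1 t^{\Re b-1}(1-t)^{\Re c-\Re b-1}|1-ty|^{-\Re a}\,dt=O\big(|y|^{-\min(\Re a,\Re b)}\big),
\end{gather*}
so combined with $(x-y)^{\Re\mu-1}\sim|y|^{\Re\mu-1}$ the joint integrand is absolutely integrable on $[0,1]\times(-\iy,x)$ exactly under the stated condition $\Re\mu<\min(\Re a,\Re b)$; near $y=x$ the factor $(x-y)^{\Re\mu-1}$ is integrable since $\Re\mu>0$. Second, the auxiliary constraint $\Re c>\Re b$ is removed by analytic continuation in~$c$: both sides of~\eqref{4} are meromorphic in $c$ with poles confined to $c\in\ZZ_{\le0}$ and $c-\mu\in\ZZ_{\le0}$, and they agree on a non-empty open subset, so they agree wherever both are defined.

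The main obstacle is the Fubini step, which is sharp in the following sense: the decay $\hyp21{a,b}cy=O(|y|^{-\min(\Re a,\Re b)})$ at $-\iy$ (read off from the connection formula expressing $w_1$ as a linear combination of $w_3$ and $w_4$) matches exactly the integrability threshold imposed by the hypotheses $\Re a,\Re b>\Re\mu$, so there is no room to relax the conditions by crude estimates. Once the interchange is justified, the rest of the argument is the routine Beta-integral manipulation sketched above, followed by the analytic-continuation step in~$c$.
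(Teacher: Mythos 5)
Your proof is correct, but it takes a genuinely different route from the paper's. The paper derives \eqref{4} from the already-established case $a-$ formula \eqref{2}: it rewrites \eqref{2} in the form \eqref{72} (with argument $y^{-1}$), invokes the connection formula expressing $F(a,b;c;\cdot)$ as a linear combination of the solutions $w_3$ and $w_4$ to obtain \eqref{4} for $x<0$, and then passes from $x<0$ to $x<1$ by a change of integration variable and analytic continuation in the complexified argument. You instead insert Euler's integral representation \eqref{65} into the left-hand side, interchange the two integrations, evaluate the inner $s$-integral as a beta integral, and recognize the remaining $t$-integral as the Euler representation of $F(a-\mu,b-\mu;c-\mu;x)$ (the shift leaves the exponent $c-b-1$ of $1-t$ untouched, which is why the trick closes). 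Both arguments are sound. What yours buys: it is self-contained (no reliance on \eqref{2} or on connection formulas), it handles all $x<1$ in a single stroke, and the hypotheses $\Re a,\Re b>\Re\mu>0$ appear transparently as the exact convergence thresholds for the beta integral and the $t$-integral at $t=0$; your Tonelli-type justification of the interchange is adequate, since the iterated integral of absolute values is finite precisely under those conditions. The cost is the auxiliary hypothesis $\Re c>\Re b>0$, which you then must remove by analytic continuation in $c$ --- legitimate, because both sides are analytic in $c$ off $\ZZ_{\le0}$, and locally uniformly so on the left since the decay rate of $F(a,b;c;y)$ at $-\iy$ is governed by $a$ and $b$ only. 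The paper's route imposes no condition on $c$ at any stage, but needs the two-step extension of the range of $x$ and leans on the Kummer connection formula; it also fits the paper's organizing principle of generating all eight fractional-integral cases from a few seeds via \eqref{61} and \eqref{62}.
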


\begin{proof}
Formula \eqref{2} can be rewritten as:
\begin{gather}
\int_{1<\frac yx} |y|^{-a}
\hyp21{a,b}c{y^{-1}} \frac{|x-y|^{\mu-1}}{\Ga(\mu)}\,dy
=\frac{\Ga(a-\mu)}{\Ga(a)} |x|^{-a+\mu}\hyp21{a-\mu,b}c{x^{-1}}\nonumber\\
\qquad (x\in(-\iy,0)\cup(1,\iy),\;\Re a>\Re\mu>0).
\label{72}
\end{gather}
Combination with \cite[2.10(2) and (5)]{4}
(or with \cite[(15.8.2)]{3}) yields \eqref{4} for $x<0$.
This, in its turn, implies by transformation of integration variable and
by analytic continuation that
\begin{gather}
\int_1^\iy \hyp21{a,b}c{1+tz} \frac{(t-1)^{\mu-1}}{\Ga(\mu)}\,dt
=\frac{\Ga(a-\mu)}{\Ga(a)} \frac{\Ga(b-\mu)}{\Ga(b)}
\frac{\Ga(c)}{\Ga(c-\mu)}\nonumber\\
\qquad{} \times(-z)^{-\mu} \hyp21{a-\mu,b-\mu}{c-\mu}{1+z}\qquad
(z\notin[0,\iy),\;\Re a>\Re\mu>0).
\end{gather}
From that we see that \eqref{4} holds for $x<1$.
\end{proof}

\paragraph{Case $\boldsymbol{a-}$, $\boldsymbol{c-}$.}
By using~\eqref{61} in~\eqref{4} we arrive at
\begin{gather}
\int_x^1 (1-y)^{a-\mu-1}
\hyp21{a,b}cy\frac{(y-x)^{\mu-1}}{\Ga(\mu)}\,dy
=\frac{\Ga(a-\mu)}{\Ga(a)}
\frac{\Ga(c-b-\mu)}{\Ga(c-b)} \frac{\Ga(c)}{\Ga(c-\mu)}\nonumber\\
\qquad {}\times(1-x)^{a-1}\hyp21{a-\mu,b}{c-\mu}x\qquad
(x<1,\;\Re a,\Re(c-b)>\Re\mu>0).
\end{gather}

\paragraph{Case $\boldsymbol{c-}$.}
By using \eqref{62} in \eqref{4} we arrive at
\begin{gather}
\int_{-\iy}^x (1-y)^{a+b-c}
\hyp21{a,b}cy\frac{(x-y)^{\mu-1}}{\Ga(\mu)}\,dy
=\frac{\Ga(c-a-\mu)}{\Ga(c-a)}
\frac{\Ga(c-b-\mu)}{\Ga(c-b)} \frac{\Ga(c)}{\Ga(c-\mu)}\nonumber\\
\qquad {} \times(1-x)^{a+b-c+\mu}\hyp21{a,b}{c-\mu}x \qquad\!\!
(x<1,\;\Re(c-a),\Re(c-b)>\Re\mu>0).\!\!\!
\label{6}
\end{gather}

\subsection{Transmutation formulas}
Corresponding to case $c+$ above,
we gave already~\eqref{54}, which gives rise to the transmutation formula~\eqref{56}, and by which
a proof of~\eqref{10} can be given.
Analogues of~\eqref{54} and~\eqref{56} can be given for all cases above.
These have the general form
\begin{gather}
L_{a',b',c';x}\left(\frac{w(y)}{v(x)} |x-y|^{\mu-1}\right)
=L_{1-a,1-b,2-c;y}\left(\frac{w(y)w_2(y)}{v(x)v_2(x)}
|x-y|^{\mu-1}\right)
\label{108}
\end{gather}
and
\begin{gather}
L_{a',b',c';x}\left(\int_I f(y) \frac{w(y)}{v(x)}
|x-y|^{\mu-1}\,dy\right)
=\int_I (L_{a,b,c}f)(y) \frac{w(y)w_2(y)}{v(x)v_2(x)}
|x-y|^{\mu-1}\,dy.
\label{107}
\end{gather}
The data to be used in these two formulas for the eight cases are
specif\/ied in the table below. In general, $w(y)$ is a product of powers
of~$|y|$ and $1-y$, and~$w_2(y)$ is equal to~$|y|$, $1-y$ or $1$.
Similarly for~$v(x)$ and~$v_2(x)$, respectively.
The variable $y$ in \eqref{108} ranges over an open interval~$I$
which is also the integration interval
in~\eqref{107}. The interval~$I$
has endpoint $x$ at one side and
endpoint $x_0=0,1$ or $-\iy$ at the other side. The variable~$x$
in~\eqref{108} and~\eqref{107} ranges over some open subset~$J$ of~$\RR$.
The function~$f$ in~\eqref{107} should be in~$C^2(J)$ and should
moreover satisfy certain growth conditions at~$x_0$, to be specif\/ied
in a moment. The parameter $\mu$ can be arbitrarily complex in~\eqref{108}
but should satisfy $\Re\mu>2$ in~\eqref{107}.
\\[\smallskipamount]
\centerline{\small\begin{tabular}{|@{\,\,}l|l|l|l|l|l|l|l@{\,\,}|}
\hline
case&$a'$&$b'$&$c'$&$x_0$&$w(y)/v(x)$&$w_2(y)/v_2(x)$&$J$
\\
\hline
$c+$&$a$&$b$&$c+\mu$&$0$&$|y|^{c-1}/|x|^{c+\mu-1}$&$1$&$(-\iy,0)\cup(0,1)$
\Tstrut\\[\medskipamount]
$a+$, $c+$&$a+\mu$&$b$&$c+\mu$&$0$&
$\displaystyle\frac{|y|^{c-1}(1-y)^{b-c-\mu}}{|x|^{c+\mu-1}(1-x)^{b-c}}$&
$\displaystyle\frac{1-y}{1-x}$&$(-\iy,0)\cup(0,1)$
\\[\bigskipamount]
$a+$, $b+$, $c+$&$a+\mu$&$b+\mu$&$c+\mu$&$0$&
$\displaystyle\frac{|y|^{c-1}(1-y)^{a+b-c}}{|x|^{c+\mu-1}(1-x)^{b-c}}$&
$1$&$(-\iy,0)\cup(0,1)$\Bstrut\\
\hline
$a-$&$a-\mu$&$b$&$c$&$0$&$|y|^{a-\mu-1}/|x|^{a-1}$&$|y|/|x|$&$(-\iy,0)\cup(0,1)$
\Tstrut\\[\medskipamount]
$a+$&$a+\mu$&$b$&$c$&$0$&
$\displaystyle\frac{|y|^{c-a-\mu-1}(1-y)^{a+b-c}}{|x|^{c-a-1}(1-x)^{a+b-c+\mu}}$&
$|y|/|x|$&$(-\iy,0)\cup(0,1)$
\Bstrut\\
\hline
$a-$, $b-$, $c-$&$a-\mu$&$b-\mu$&$c-\mu$&$-\iy$&$1$&$1$&$(-\iy,1)$
\Tstrut\\[\medskipamount]
$a-$, $c-$&$a-\mu$&$b$&$c-\mu$&$1$&$\displaystyle\frac{(1-y)^{a-\mu-1}}{(1-x)^{a-1}}$&
$\displaystyle\frac{1-y}{1-x}$&$(-\iy,1)$
\\[\bigskipamount]
$c-$&$a$&$b$&$c-\mu$&$-\iy$&$\displaystyle\frac{(1-y)^{a+b-c}}{(1-x)^{a+b-c+\mu}}$&
$1$&$(-\iy,1)$
\Bstrut\\
\hline
\end{tabular}}
\\[\smallskipamount]
Verif\/ication of \eqref{108} in the eight cases is by straightforward
computation, possibly using computer algebra.

In order to f\/ind the growth conditions in~\eqref{107} for~$f$ at~$x_0$
we recall that~\eqref{107} is obtained from the string of equalities
\begin{gather*}
 L_{a',b',c';x}\left(\int_I f(y) \frac{w(y)}{v(x)}
|x-y|^{\mu-1}\,dy\right) =\int_I f(y) L_{a',b',c';x}\left(\frac{w(y)}{v(x)}
|x-y|^{\mu-1}\right)dy\\
\qquad {} =\int_I f(y) L_{1-a,1-b,2-c;y}\left(\frac{w(y)w_2(y)}{v(x)v_2(x)}
|x-y|^{\mu-1}\right)dy\\
\qquad {} =\int_I (L_{a,b,c}f)(y)\frac{w(y)w_2(y)}{v(x)v_2(x)}
|x-y|^{\mu-1}\,dy.
\end{gather*}
In the various steps we have to consider the singularities
for~$y$ at~$x$ and at~$x_0$. The f\/irst singularity is already
taken into account by the condition $\Re\mu>2$.
As for the singularity at~$x_0$ the f\/irst and the third equality
need extra assumptions (which will also imply that the four parts
of the string are well def\/ined). For the f\/irst equality we need
\begin{gather*}
f(y) w(y) (1+|y|)^{\Re \mu-1} \ \mbox{as a function of $y$ is
$L^1$ at $y=x_0$.}
\end{gather*}
For the third equality the needed assumptions follow from
Lemma \ref{55}:
\begin{gather*}
\left.\begin{array}{@{}l}
f''(y) y(1-y)w(y)w_2(y)(1+|y|)^{\Re\mu-1}\\[\smallskipamount]
f'(y) w(y)w_2(y)(1+|y|)^{\Re\mu}\\[\smallskipamount]
f(y) y(1-y)\dfrac{d^2}{dy^2}\big(w(y)w_2(y)(1+|y|)^{\Re\mu-1}\big)\\[\smallskipamount]
f(y) \dfrac d{dy}\big(w(y)w_2(y)(1+|y|)^{\Re\mu}\big)\\[\smallskipamount]
f(y) w(y)w_2(y)(1+|y|)^{\Re\mu-1}
\end{array}\right\}
\;\mbox{as functions of $y$ are
$L^1$ at $y=x_0$,}
\end{gather*}
and
\begin{gather*}
\left.\begin{array}{@{}l}
f'(y) y(1-y)w(y)w_2(y)(1+|y|)^{\Re\mu-1}\\[\smallskipamount]
f(y) y(1-y)\dfrac d{dy}\big(w(y)w_2(y)(1+|y|)^{\Re\mu-1}\big)\\[\smallskipamount]
f(y) w(y)w_2(y)(1+|y|)^{\Re\mu}
\end{array}\right\}
\to0\;\mbox{as $y\to x_0$.}
\end{gather*}

Thus, for given $f$, the identity~\eqref{107} is settled under certain
constraints for $a$, $b$, $c$ and $\mu$. But then the identity will be valid
under relaxed constraints on~$a$, $b$, $c$ and $\mu$ such that both sides of~\eqref{107} are analytic in these four parameters, i.e., if~$\Re\mu>2$
and
\begin{gather*}
\left.\begin{array}{@{}l}
f''(y) y(1-y)w(y)w_2(y)(1+|y|)^{\Re\mu-1}\\[\smallskipamount]
f'(y) w(y)w_2(y)(1+|y|)^{\Re\mu}\\[\smallskipamount]
f(y) w(y) (1+|y|)^{\Re \mu-1}
\end{array}\right\}
\;\mbox{as functions of $y$ are
$L^1$ at $y=x_0$.}
\end{gather*}

\section[Fractional integral transformations for the six solutions
of the hypergeometric differential equation]{Fractional integral transformations for the six solutions\\
of the hypergeometric dif\/ferential equation}
\label{98}

Formula~\eqref{10} is a fractional integral transformation of type $c+$ for the solution
$w_1$ of the hypergeometric dif\/ferential equation.
It turns out that for all six solutions $w_i$ there is such a transformation of
the form $(\,.\,)^{c-1} w_i(\,.\,;a,b,c) \to
(\,.\,)^{c+\mu-1} w_i(\,.\,;a,b,c+\mu)$.
These can all be
obtained  by rewriting  fractional integral transformations for
${}_2F_1(a,b;c;\,.\,)$ of various types given in Section~\ref{11}, namely types
$c+$; $a-$, $b-$, $c-$; $a-$; $a-$; $c-$; $a+$, $b+$, $c+$, respectively.
Here we list these formulas. Each formula is preceded by the formula
number of the formula in Section~\ref{11} to which it reduces:
\begin{gather}
\eqref{10}\colon \ \int_{0<y/x<1} |y|^{c-1}w_1(y;a,b,c) \frac{(x-y)^{\mu-1}}{\Ga(\mu)}\,dy
=\frac{\Ga(c)}{\Ga(c+\mu)}|x|^{c+\mu-1} w_1(x;a,b,c+\mu)\nonumber\\
\hphantom{\eqref{10}\colon \ }{} \qquad\quad (x\in(-\iy,0)\cup(0,1),\;\Re c>0,\;\Re\mu>0);
\label{93}
\\
\eqref{4}\colon \
\int_{-\iy}^x |y|^{c-1} w_2(y;a,b,c) \frac{(x-y)^{\mu-1}}{\Ga(\mu)}\,dy
=\frac{\Ga(a-c-\mu+1)}{\Ga(a-c+1)}\nonumber\\
\hphantom{\eqref{4}\colon \ }{} \qquad{}
\times\frac{\Ga(b-c-\mu+1)}{\Ga(b-c+1)}
\frac{\Ga(2-c)}{\Ga(2-c-\mu)}
|x|^{c+\mu-1} w_2(x;a,b,c+\mu)\nonumber\\
\hphantom{\eqref{4}\colon \ }{} \qquad\quad (x\in(-\iy,1),\;\Re(a-c+1),\Re(b-c+1)>\Re\mu>0);
\label{100}
\\
\eqref{72}\colon \
\int_{y/x>1} |y|^{c-1} w_3(y;a,b,c) \frac{|x-y|^{\mu-1}}{\Ga(\mu)}\,dy
=\frac{\Ga(a-c-\mu+1)}{\Ga(a-c+1)} |x|^{c+\mu-1} w_3(x;a,b,c+\mu)\nonumber\\
\hphantom{\eqref{72}\colon \ }{} \qquad \quad (x\in(-\iy,0)\cup(1,\iy),\;\Re(a-c+1)>\Re\mu>0);
\label{71}
\\
\eqref{72}\colon \
\int_{y/x>1} |y|^{c-1} w_4(y;a,b,c) \frac{|x-y|^{\mu-1}}{\Ga(\mu)}\,dy
=\frac{\Ga(b-c-\mu+1)}{\Ga(b-c+1)} |x|^{c+\mu-1} w_4(x;a,b,c+\mu)\nonumber\\
\hphantom{\eqref{72}\colon \ }{}\qquad \quad (x\in(-\iy,0)\cup(1,\iy),\;\Re(b-c+1)>\Re\mu>0);
\\
\eqref{6}\colon \
\int_x^\iy y^{c-1}w_5(y;a,b,c) \frac{(y-x)^{\mu-1}}{\Ga(\mu)}\,dy
=\frac{\Ga(b-c-\mu+1)}{\Ga(b-c+1)}\nonumber\\
\hphantom{\eqref{6}\colon \ }{} \qquad {}
\times\frac{\Ga(a-c-\mu+1)}{\Ga(a-c+1)} \frac{\Ga(a+b-c+1)}{\Ga(a+b-c-\mu+1)}
x^{c+\mu-1}w_5(x;a,b,c+\mu)\nonumber\\
\hphantom{\eqref{6}\colon \ }{} \qquad \quad
(x\in(0,\iy),\;\Re(b-c+1),\Re(a-c+1)>\Re(\mu)>0);
\label{102}
\\
\eqref{7}\colon \
\int_{0<\frac{1-y}{1-x}<1} y^{c-1} w_6(y;a,b,c) \frac{|y-x|^{\mu-1}}{\Ga(\mu)}\,dy\nonumber\\
\hphantom{\eqref{7}\colon \ }{}\qquad
=\frac{\Ga(c-a-b+1)}{\Ga(c-a-b+\mu+1)} x^{c+\mu-1}w_6(x;a,b,c+\mu)\nonumber\\
\hphantom{\eqref{7}\colon \ }{}\qquad  \quad \
(x\in(0,1)\cup(1,\iy),\;\Re(c-a-b+1)>0,\;\Re\mu>0).
\label{82}
\end{gather}

It is a straightforward exercise to list also the fractional integral
transformation formulas of the~$w_i$ corresponding to the other seven
types. We do not list all these formulas here, but only give their
essential behaviour in the following table. Here an entry in~$i$th row,
$j$th column (place~$(i,j)$)
tells us that the transformation formula for $w_j$ of type given at $(i,1)$
can be reduced to the transformation formula for $w_1$ of type given
at $(i,j)$.
\\[\smallskipamount]
\centerline{\begin{tabular}{|l||l|l|l|l|l|}
\hline
$w_1$&$w_2$&$w_3$&$w_4$&$w_5$&$w_6$\\
\hline
$c+$&$a-$, $b-$, $c-$&$a-$&$a-$&$c-$&$a+$, $b+$, $c+$\\
$a+$, $c+$&$a-$, $c-$&$a+$, $c+$&$a-$, $c-$&$a+$&$a+$\\
$a+$, $b+$, $c+$&$c-$&$a+$&$a+$&$a+$, $b+$, $c+$&$c-$\\
$a-$&$a-$&$a-$, $b-$, $c-$&$c+$&$a-$, $c-$&$a+$, $c+$\\
$a+$&$a+$&$a+$, $b+$, $c+$&$c-$&$a+$, $c+$&$a-$, $c-$\\
$a-$, $b-$, $c-$&$c+$&$a-$&$a-$&$a-$, $b-$, $c-$&$c+$\\
$a-$, $c-$&$a+$, $c+$&$a-$, $c-$&$a+$, $c+$&$a-$&$a-$\\
$c-$&$a+$, $b+$, $c+$&$a+$&$a+$&$c+$&$a-$, $b-$, $c-$\\
\hline
\end{tabular}}
\\[\smallskipamount]
The cases $c+$; $c-$; $a-$, $b-$, $c-$; $a+$, $b+$, $c+$ occur 5 times,
the cases $a+$, $c+$; $a-$, $c-$ 6 times, and the cases~$a-$; $a+$ 8 times.
That these numbers are not all equal will be caused by the symmetry in~$a$ and~$b$ of the hypergeometric function.

Some more examples from the above table which we need later
(because of specialization to Euler type integral representations)
are:
\begin{gather}
\int_{0<\frac yx<1} |y|^{a-\mu-1} w_2(y;a,b,c)
\frac{|x-y|^{\mu-1}}{\Ga(\mu)}\,dy
=\frac{\Ga(a-c-\mu+1)}{\Ga(a-c+1)} |x|^{a-1} w_2(x;a-\mu,b,c)\nonumber\\
\qquad (x\in(-\iy,0)\cup(0,1),\;\Re(a-c+1)>\Re\mu>0),
\label{83}
\\
\int_{\frac yx>1} w_4(y;a,b,c) \frac{|x-y|)^{\mu-1}}{\Ga(\mu)}\,dy
=\frac{\Ga(b-\mu)}{\Ga(b)}  w_4(x;a-\mu,b-\mu,c-\mu)\nonumber\\
\qquad (x\in(-\iy,0)\cup(1,\iy),\;\Re b>\Re\mu>0),
\label{84}
\\
\int_{0<\frac{1-y}{1-x}<1} w_6(y;a,b,c)
\frac{|x-y|^{\mu-1}}{\Ga(\mu)}\,dy
=\frac{\Ga(c-a-b+1)}{\Ga(c-a-b+\mu+1)}
w_6(x;a-\mu,b-\mu,c-\mu)\nonumber\\
\qquad (x\in(0,1)\cup(1,\iy),\;\Re(c-a-b+1)>0,\;\Re\mu>0).
\label{19}
\end{gather}

\section[Generalized Stieltjes transforms between solutions
of the hypergeometric differential equation]{Generalized Stieltjes transforms between solutions\\
of the hypergeometric dif\/ferential equation}
\label{99}

First we recall the proof of~\eqref{14} as given by
Karp \& Sitnik \cite[Lemma~2]{6}. Expand for $|z|<1$
the left-hand side of~\eqref{14} as
\begin{gather*}
\sum_{k=0}^\iy \frac{(a)_k}{k!} z^k
\int_0^1 t^{d+e-b-c-1} (1-t)^{b+k-1}
\hyp21{d-c,e-c}{d+e-b-c}t\,dt.
\end{gather*}
By the limit case $x\uparrow1$ of \eqref{1} this equals
\begin{gather*}
\sum_{k=0}^\iy \frac{(a)_k}{k!} z^k\,
\frac{\Ga(d+e-b-c)\Ga(b+k)}{\Ga(d+e-c+k)}
\hyp21{d-c,e-c}{d+e-c+k}1.
\end{gather*}
By the Gauss summation formula \cite[Theorem 2.2.2]{8} we get
\begin{gather*}
\sum_{k=0}^\iy \frac{(a)_k}{k!} z^k
\frac{\Ga(d+e-b-c)\Ga(b+k)}{\Ga(d+e-c+k)}
\frac{\Ga(c+k)\Ga(d+e-c+k)}{\Ga(d+k)\Ga(e+k)} ,
\end{gather*}
which is equal to the right-hand side of \eqref{14}.
All steps can be rigorously justif\/ied because of the constraints
in \eqref{14}.

We now list the generalized Stieltjes transforms mapping a solution~$w_i$ to a solution~$w_j$. They can all be obtained from the special case~\eqref{86} of~\eqref{14} by change of parameters,
change of integration variable, and application of~\eqref{61}
and~\eqref{62}.
In particular, \eqref{87} and~\eqref{88} below are simple rewritings of
\eqref{86} by a change of parameters. Also, \eqref{90} below is a~rewriting of \cite[20.2(10)]{15}.

The formulas in the list are grouped in cases similar to the cases
in Section~\ref{11}.
In all formulas the constraints are that $x$ is in an
interval $(-\iy,0)$, $(0,1)$ or $(1,\iy)$ which does not coincide
with the integration interval,
and that the arguments of the three gamma functions
in the numerator on the right-hand side have positive real parts.

\paragraph{Case $\boldsymbol{c+}$.}
\begin{gather}
\int_{-\iy}^0 (-y)^{c-1} w_1(y;a,b,c) (x-y)^{\mu-1}\,dy\nonumber\\
\qquad {} =\frac{\Ga(a-c-\mu+1)\Ga(b-c-\mu+1)\Ga(c)}{\Ga(a+b-c-\mu+1)\Ga(1-\mu)}
x^{c-1+\mu} w_5(x;a,b,c+\mu),
\label{90}
\\
\int_1^\iy y^{c-1} w_6(y;a,b,c) (y-x)^{\mu-1}\,dy\nonumber\\
\qquad{}
=\frac{\Ga(a-c-\mu+1) \Ga(b-c-\mu+1) \Ga(c-a-b+1)}{\Ga(2-c-\mu) \Ga(1-\mu)}  |x|^{c+\mu-1} w_2(x;a,b,c+\mu).
\label{101}
\end{gather}

\paragraph{Case $\boldsymbol{a+}$, $\boldsymbol{c+}$.}
\begin{gather}
\int_0^1 y^{c-1} (1-y)^{b-c-\mu} w_1(y;a,b,c) |x-y|^{\mu-1}\,dy\nonumber\\
\qquad {}
=\frac{\Ga(-a-\mu+1)\Ga(b-c-\mu+1)\Ga(c)}{\Ga(b-a-\mu+1)\Ga(1-\mu)}\nonumber\\
\qquad \quad {}\times
  |x|^{c+\mu-1} |x-1|^{b-c} w_4(x;a+\mu,b,c+\mu),
\label{89}
\\
\int_1^\iy y^{c-1}(y-1)^{b-c-\mu} w_3(y;a,b,c) (y-x)^{\mu-1}\,dy\nonumber\\
\qquad {}
=\frac{\Ga(-a-\mu+1) \Ga(b-c-\mu+1) \Ga(a-b+1)}
{\Ga(-c-\mu+2) \Ga(1-\mu)}\nonumber\\
\qquad \quad {}\times
 |x|^{c+\mu-1} |x-1|^{b-c} w_2(x;a+\mu,b,c+\mu).\label{37}
\end{gather}

\paragraph{Case $\boldsymbol{b+}$, $\boldsymbol{c+}$.}
\begin{gather}
\int_0^1 y^{c-1} (1-y)^{a-c-\mu} w_1(y;a,b,c) |x-y|^{\mu-1}\,dy\nonumber\\
\qquad {} =\frac{\Ga(-b-\mu+1)\Ga(a-c-\mu+1)\Ga(c)}{\Ga(a-b-\mu+1)\Ga(1-\mu)}\nonumber\\
\qquad \quad {}\times
 |x|^{c+\mu-1}|x-1|^{a-c} w_3(x;a,b+\mu,c+\mu),
\\
\int_1^\iy y^{c-1}(y-1)^{a-c-\mu} w_4(y;a,b,c) |x-y|^{\mu-1}\,dy\nonumber\\
\qquad {} =\frac{\Ga({-}b-\mu+1) \Ga(a-c-\mu+1) \Ga(b-a+1)}{\Ga(-c-\mu+2) \Ga(1-\mu)}\nonumber\\
\qquad \quad {}\times
|x|^{c+\mu-1}  |x-1|^{a-c} w_2(x;a,b+\mu,c+\mu).
\end{gather}

\paragraph{Case $\boldsymbol{a+}$, $\boldsymbol{b+}$, $\boldsymbol{c+}$.}
\begin{gather}
\int_{-\iy}^0 (-y)^{c-1} (1-y)^{a+b-c}w_1(x;a,b,c) (x-y)^{\mu-1}\,dy\nonumber\\
\qquad{}=\frac{\Ga(-a-\mu+1)\Ga(-b-\mu+1)\Ga(c)}{\Ga(c-a-b-\mu+1)\Ga(1-\mu)}\nonumber\\
\qquad\quad{}\times
 x^{c+\mu-1}(1-x)^{a+b-c+\mu} w_6(x;a+\mu,b+\mu,c+\mu),
\\
\int_1^\iy y^{c-1} (y-1)^{a+b-c} w_5(y;a,b,c)(y-x)^{\mu-1}\,dy\nonumber\\
\qquad{} =\frac{\Ga(-a-\mu+1) \Ga(-b-\mu+1) \Ga(a+b-c+1)}{\Ga(2-c-\mu) \Ga(1-\mu)}\nonumber\\
\qquad\quad {}\times
 |x|^{c+\mu-1} (1-x)^{a+b-c+\mu} w_2(x;a+\mu,b+\mu,c+\mu).
\end{gather}

\paragraph{Case $\boldsymbol{a-}$.}
\begin{gather}
\int_{-\iy}^0 (-y)^{a-\mu-1} w_4(y;a,b,c) (x-y)^{\mu-1}\,dy\nonumber\\
\qquad {} =\frac{\Ga(a-\mu)\Ga(a-c-\mu+1)\Ga(b-a+1)}{\Ga(a+b-c-\mu+1)\Ga(1-\mu)}
  x^{a-1} w_5(x;a-\mu,b,c),
\label{92}
\\
\int_0^1 y^{a-\mu-1} w_6(y;a,b,c) |x-y|^{\mu-1}\,dy\nonumber\\
\qquad {} =\frac{\Ga(a-c-\mu+1) \Ga(a-\mu) \Ga(c-a-b+1)}
{\Ga(a-b-\mu+1) \Ga(1-\mu)}
 |x|^{a-1} w_3(x;a-\mu,b,c).
\label{87}
\end{gather}

\paragraph{Case $\boldsymbol{b-}$.}
\begin{gather}
\int_{-\iy}^0 (-y)^{b-\mu-1} w_3(y;a,b,c)(x-y)^{\mu-1}\,dy\nonumber\\
\qquad {}
=\frac{\Ga(b-\mu)\Ga(b-c-\mu+1)\Ga(a-b+1)}{\Ga(a+b-c-\mu+1)\Ga(1-\mu)}
  x^{b-1} w_5(x;a,b-\mu,c),
\\
\int_0^1 y^{b-\mu-1} w_6(y;a,b,c) |x-y|^{\mu-1}\,dy\nonumber\\
\qquad {} =\frac{\Ga(b-c-\mu+1) \Ga(b-\mu) \Ga(c-a-b+1)}
{\Ga(b-a-\mu+1) \Ga(1-\mu)}
  |x|^{b-1} w_4(x;a,b-\mu,c).
\label{88}
\end{gather}

\paragraph{Case $\boldsymbol{a+}$.}
\begin{gather}
\int_{-\iy}^0 (-y)^{c-a-\mu-1}(1-y)^{a+b-c} w_3(y;a,b,c) (x-y)^{\mu-1}\,dy\\
\qquad{}=\frac{\Ga(-a-\mu+1) \Ga(c-a-\mu) \Ga(a-b+1)}{\Ga(c-a-b-\mu+1)\Ga(1-\mu)}
x^{c-a-1}(1-x)^{a+b-c+\mu} w_6(x;a+\mu,b,c),\nonumber
\\
\int_0^1 y^{c-a-\mu-1} (1-y)^{a+b-c} w_5(y;a,b,c)|x-y|^{\mu-1}\,dy\nonumber\\
\qquad {}=\frac{\Ga(-a-\mu+1) \Ga(c-a-\mu) \Ga(a+b-c+1)}{\Ga(b-a-\mu+1) \Ga(1-\mu)}\nonumber\\
\qquad \quad {}\times
|x|^{c-a-1} |1-x|^{a+b-c-\mu} w_4(x;a + \mu,b,c).
\end{gather}

\paragraph{Case $\boldsymbol{b+}$.}
\begin{gather}
\int_{-\iy}^0 (-y)^{c-b-\mu-1}(1-y)^{a+b-c} w_4(y;a,b,c) (x-y)^{\mu-1}\,dy\nonumber\\
\qquad {}=\frac{\Ga(-b-\mu+1) \Ga(c-b-\mu) \Ga(b-a+1)}
{\Ga(c-a-b-\mu+1)\Ga(1-\mu)}\nonumber\\
\qquad\quad {}\times
x^{c-b-1}(1-x)^{a+b-c+\mu} w_6(x;a,b+\mu,c),
\\
\int_0^1 y^{c-b-\mu-1} (1-y)^{a+b-c} w_5(y;a,b,c) |x-y|^{\mu-1}\,dy\nonumber\\
\qquad{}=\frac{\Ga(-b-\mu+1) \Ga(c-b-\mu) \Ga(a+b-c+1)}{\Ga(a-b-\mu+1) \Ga(1-\mu)}\nonumber\\
\qquad\quad {}\times
|x|^{c-b-1} |1-x|^{a+b-c-\mu} w_3(x;a,b+\mu,c).
\end{gather}

\paragraph{Case $\boldsymbol{a-}$, $\boldsymbol{b-}$, $\boldsymbol{c-}$.}
\begin{gather}
\int_{-\iy}^0 w_2(y;a,b,c) (x-y)^{\mu-1}\,dy\nonumber\\
\qquad{}
=\frac{\Ga(a-\mu)\Ga(b-\mu)\Ga(2-c)}{\Ga(a+b-c-\mu+1)\Ga(1-\mu)}
w_5(x;a-\mu,b-\mu,c-\mu),
\\
\int_1^\iy w_6(y;a,b,c) (y-x)^{\mu-1}\,dy\nonumber\\
\qquad{}
=\frac{\Ga(a-\mu)\Ga(b-\mu)\Ga(c-a-b+1)}{\Ga(c-\mu)\Ga(1-\mu)}
w_1(x;a-\mu,b-\mu,c-\mu).
\end{gather}

\paragraph{Case $\boldsymbol{a-}$, $\boldsymbol{c-}$.}
\begin{gather}
\int_0^1 (1-y)^{a-\mu-1} w_2(y;a,b,c) |x-y|^{\mu-1}\,dy\nonumber\\
\qquad{}
=\frac{\Ga(c-b-\mu)\Ga(a-\mu)\Ga(2-c)}{\Ga(a-b-\mu+1)\Ga(1-\mu)}  |x-1|^{a-1} w_3(x;a-\mu,b,c-\mu),
\\
\int_1^\iy (y-1)^{a-\mu-1} w_4(y;a,b,c) (y-x)^{\mu-1}\,dy\nonumber\\
\qquad{}
=\frac{\Ga(c-b-\mu) \Ga(a-\mu) \Ga(b-a+1)}{\Ga(c-\mu) \Ga(1-\mu)} |1-x|^{a-1} w_1(x;a-\mu,b,c-\mu).
\label{91}
\end{gather}

\paragraph{Case $\boldsymbol{b-}$, $\boldsymbol{c-}$.}
\begin{gather}
\int_0^1 (1-y)^{b-\mu-1} w_2(y;a,b,c) |x-y|^{\mu-1}\,dy\nonumber\\
\qquad{}
=\frac{\Ga(c-a-\mu)\Ga(b-\mu)\Ga(2-c)}{\Ga(b-a-\mu+1)\Ga(1-\mu)}  |x-1|^{b-1} w_4(x;a,b-\mu,c-\mu),
\\
\int_1^\iy (y-1)^{b-\mu-1} w_3(y;a,b,c) (y-x)^{\mu-1}\,dy\nonumber\\
\qquad{}
=\frac{\Ga(c-a-\mu) \Ga(b-\mu) \Ga(a-b+1)}{\Ga(c-\mu) \Ga(1-\mu)} |1-x|^{b-1} w_1(x;a,b-\mu,c-\mu).
\end{gather}

\paragraph{Case $\boldsymbol{c-}$.}
\begin{gather}
\int_{-\iy}^0 (1-y)^{a+b-c} w_2(y;a,b,c) (x-y)^{\mu-1}\,dy\nonumber\\
\qquad{}
=\frac{\Ga(c-a-\mu) \Ga(c-b-\mu) \Ga(2-c)}
{\Ga(c-a-b-\mu+1) \Ga(1-\mu)}  |1-x|^{a+b-c+\mu} w_6(x;a,b,c-\mu),
\\
\int_1^\iy (y-1)^{a+b-c} w_5(y;a,b,c) (y-x)^{\mu-1}\,dy\nonumber\\
\qquad{}
=\frac{\Ga(c-a-\mu)\Ga(c-b-\mu)\Ga(a+b-c+1)}{\Ga(c-\mu)\Ga(1-\mu)} (1-x)^{a+b-c+\mu} w_1(x;a,b,c-\mu).
\end{gather}

We summarize the results of the above list
in the following table. In the box in row~$w_i$ and
column $w_j$ the type is given of the generalized Stieltjes transform sending~$w_i$ to~$w_j$.
\\[\smallskipamount]
\centerline{\begin{tabular}{| l || l | l | l | l | l | l | }
\hline
&$w_1$&$w_2$&$w_3$&$w_4$&$w_5$&$w_6$\\
\hline\hline
$w_1$&&&$b+$, $c+$&$a+$, $c+$&$c+$&$a+$, $b+$, $c+$\\
\hline
$w_2$&&&$a-$, $c-$&$b-$, $c-$&$a-$, $b-$, $c-$&$c-$\\
\hline
$w_3$&$b-$, $c-$&$a+$, $c+$&&&$b-$&$a+$\\
\hline
$w_4$&$a-$, $c-$&$b+$, $c+$&&&$a-$&$b+$\\
\hline
$w_5$&$c-$&$a+$, $b+$, $c+$&$b+$&$a+$&&\\
\hline
$w_6$&$a-$, $b-$, $c-$&$c+$&$a-$&$b-$&&\\
\hline
\end{tabular}}
\\[\smallskipamount]
Due to the denominator factor $\Ga(1-\mu)$ all generalized Stieltjes
transforms above become zero if~$\mu$ is a positive integer satisfying
the constraints.

\section{Euler type integral representations}
\label{105}

Below we give the explicit Euler type integral representations
which have form~\eqref{76} or~\eqref{85}.

\paragraph{Euler type integrals with integrand $\boldsymbol{|x|^{1-c}|y|^{b-1}|1-y|^{-a}|x-y|^{c-b-1}}$.}
\begin{gather}
w_1(x;a,b,c)=\frac{\Ga(c)}{\Ga(b)\Ga(c-b)} |x|^{1-c}
\int_{0<y/x<1} |y|^{b-1}(1-y)^{-a} |x-y|^{c-b-1}\,dy\nonumber\\
\hphantom{w_1(x;a,b,c)=}{} \
(x\in(-\iy,0)\cup(0,1),\;\Re c>\Re b>0),
\label{23}
\\
w_2(x;a,b,c)=\frac{\Ga(2-c)}{\Ga(a-c+1)\Ga(1-a)} |x|^{1-c}
\int_1^\iy y^{b-1}(y-1)^{-a} (y-x)^{c-b-1}\,dy\nonumber\\
\hphantom{w_2(x;a,b,c)=}{} \
(x\in(-\iy,0)\cup(0,1),\;\Re(c-1)<\Re a<1),
\label{24}
\\
w_3(x;a,b,c)=\frac{\Ga(a-b+1)}{\Ga(a-c+1)\Ga(c-b)} |x|^{1-c}
\int_{y/x>1} |y|^{b-1}|y-1|^{-a} |y-x|^{c-b-1}\,dy\nonumber\\
\hphantom{w_3(x;a,b,c)=}{} \
(x\in(-\iy,0)\cup(1,\iy),\;\Re(a-c+1)>0,\;\Re(c-b)>0),
\label{25}
\\
w_4(x;a,b,c)=\frac{\Ga(b-a+1)}{\Ga(-a)\Ga(b+1)} |x|^{1-c}
\int_0^1 y^{b-1}(1-y)^{-a} |x-y|^{c-b-1}\,dy\nonumber\\
\hphantom{w_4(x;a,b,c)=}{}  \
(x\in(-\iy,0)\cup(1,\iy),\;\Re a<0,\;\Re b>-1),
\label{26}
\\
w_5(x;a,b,c)=\frac{\Ga(a+b-c+1)}{\Ga(a-c+1)\Ga(b)} x^{1-c}
\int_{-\iy}^0 (-y)^{b-1}(1-y)^{-a} (x-y)^{c-b-1}\,dy\nonumber\\
\hphantom{w_5(x;a,b,c)=}{} \
(x\in(0,\iy),\;\Re b>0,\;\Re(a-c+1)>0),
\label{27}
\\
w_6(x;a,b,c)=\frac{\Ga(c-a-b+1)}{\Ga(1-a)\Ga(c-b)} x^{1-c}
\int_{0<\frac{1-y}{1-x}<1} y^{b-1}(1-y)^{-a} |x-y|^{c-b-1}\,dy\nonumber\\
\hphantom{w_6(x;a,b,c)=}{}  \
(x\in(0,1)\cup(1,\iy),\;\Re(c-b)>0,\;\Re a<1).
\label{28}
\end{gather}

\paragraph{Euler type integral transformations with integrand
$\boldsymbol{|y|^{a-c} |1-y|^{c-b-1} |x-y|^{-a}}$.}
\begin{gather}
w_1(x;a,b,c)=\frac{\Ga(c)}{\Ga(b)\Ga(c-b)}
\int_1^\iy y^{a-c}(y-1)^{c-b-1} (y-x)^{-a}\,dy\nonumber\\
\hphantom{w_1(x;a,b,c)=}{} \
(x<1,\;\Re c>\Re b>0),
\label{22}
\\
w_2(x;a,b,c)=\frac{\Ga(2-c)}{\Ga(a-c+1)\Ga(1-a)}
\int_{0<y/x<1} |y|^{a-c}(1-y)^{c-b-1} |x-y|^{-a}\,dy\nonumber\\
\hphantom{w_2(x;a,b,c)=}{} \
(x\in(-\iy,0)\cup(0,1),\;2>\Re(a+1)>\Re c),
\label{29}
\\
w_3(x;a,b,c)=\frac{\Ga(a-b+1)}{\Ga(a-c+1)\Ga(c-b)}
\int_0^1 y^{a-c}(1-y)^{c-b-1} |x-y|^{-a}\,dy\nonumber\\
\hphantom{w_3(x;a,b,c)=}{} \
(x\in(-\iy,0)\cup(1,\iy),\;\Re(a+1)>\Re c>\Re b),
\label{30}
\\
w_4(x;a,b,c)=\frac{\Ga(b-a+1)}{\Ga(-a)\Ga(b+1)}
\int_{y/x>1} |y|^{a-c} |y-1|^{c-b-1} |y-x|^{-a}\,dy\nonumber\\
\hphantom{w_4(x;a,b,c)=}{} \
(x\in(-\iy,0)\cup(1,\iy),\;\Re a<1,\;\Re b>0),
\label{31}
\\
w_5(x;a,b,c)=\frac{\Ga(a+b-c+1)}{\Ga(a-c+1)\Ga(b)}
\int_{-\iy}^0 (-y)^{a-c}(1-y)^{c-b-1}(x-y)^{-a}\,dy\nonumber\\
\hphantom{w_5(x;a,b,c)=}{} \
(x\in(0,\iy),\;\Re b>0,\;\Re(a+1)>\Re c),
\label{32}
\\
w_6(x;a,b,c)=\frac{\Ga(c-a-b+1)}{\Ga(1-a)\Ga(c-b)}
\int_{0<\frac{1-y}{1-x}<1} |y|^{a-c} |1-y|^{c-b-1} |x-y|^{-a}\,dy\nonumber\\
\hphantom{w_6(x;a,b,c)=}{} \
(x\in(0,1)\cup(1,\iy),\;\Re(c-b)>0,\;\Re a<1).
\label{18}
\end{gather}

The integrals in \eqref{23}, \eqref{25}, \eqref{28}
(for~$w_1$,~$w_3$,~$w_6$) and in~\eqref{29}, \eqref{31}, \eqref{18}
(for~$w_2$,~$w_4$,~$w_6$)
are of fractional integral type.
Formulas~\eqref{25}, \eqref{29},~\eqref{31} and~\eqref{18}
can be reduced to~\eqref{64} by a~change of integration variable, while~\eqref{23} is a rewritten version
of~\eqref{64}.
Each of the six formulas \eqref{23}--\eqref{28}
is paired with one of the six formulas \eqref{22}--\eqref{18}
by a transformation of integration variable which involves~$x$.
The pairing is as follows:
\begin{alignat*}{4}
& \eqref{23} \ \leftrightarrow \ \eqref{30},\qquad &&  \eqref{24} \ \leftrightarrow \ \eqref{31},\qquad &&
\eqref{25} \ \leftrightarrow \ \eqref{22},& \\
&\eqref{26} \ \leftrightarrow \ \eqref{29}, \qquad &&  \eqref{27} \ \leftrightarrow \  \eqref{32},\qquad &&
\eqref{28} \ \leftrightarrow \  \eqref{18}. &
\end{alignat*}

In 1874 Letnikov \cite[(11), (13), (16), (17), (21), (22)]{17}
(see also Sostak \cite[Section~4]{18})
already gave the Euler type integral
representations \eqref{29}, \eqref{18}, \eqref{31}, \eqref{23}, \eqref{28}, \eqref{25}, respectively,
as solutions of the hypergeometric
dif\/ferential equation \cite[p.~115,~(A)]{17}.
In order to arrive at these results he essentially considered
the fractional integral operator as a transmutation operator
with respect to the hypergeometric dif\/ferential operator.
\\[\smallskipamount]
\centerline{\begin{tabular}{| l l l l l |}
\hline
int.~rep.&transform&from&to&case\\
\hline\hline
\eqref{23}&\eqref{93}&$w_1$&$w_1$&$c+$\\
\eqref{24}&\eqref{37}&$w_3$&$w_2$&$a+$, $c+$\\
\eqref{25}&\eqref{71}&$w_3$&$w_3$&$c+$\\
\eqref{26}&\eqref{89}&$w_1$&$w_4$&$a+$, $c+$\\
\eqref{27}&\eqref{90}&$w_1$&$w_5$&$c+$\\
\eqref{28}&\eqref{82}&$w_6$&$w_6$&$c+$\\
\hline
\eqref{22}&\eqref{91}&$w_4$&$w_1$&$a-$, $c-$\\
\eqref{29}&\eqref{83}&$w_2$&$w_2$&$a-$\\
\eqref{30}&\eqref{87}&$w_6$&$w_3$&$a-$\\
\eqref{31}&\eqref{84}&$w_4$&$w_4$&$a-$, $b-$, $c-$\\
\eqref{32}&\eqref{92}&$w_4$&$w_5$&$a-$\\
\eqref{18}&\eqref{19}&$w_6$&$w_6$&$a-$, $b-$, $c-$\\
\hline
\end{tabular}}

The above 
 table gives for each Euler type integral representation
in the f\/irst column the transformation formula in the second column
from which it can be obtained by specialization of parameters
($b=c$ in the f\/irst six rows and $a=-1$ in the last six rows).
The transformation formula sends $w_i$ in the third column to~$w_j$
in the fourth column. Its case is given in the f\/ifth column.
Formula \eqref{27} is essentially the same as \cite[14.4(9)]{15}.

\section[Generalized Stieltjes transform and fractional integral transform combined]{Generalized Stieltjes transform \\
and fractional integral transform combined}
\label{106}

As observed in \cite[p.~213]{15}, generalized Stieltjes transforms
of dif\/ferent order are connected with each other by fractional integration.
Formula~\eqref{94} below was essentially given there,
and later given with proof in \cite[Theorem~9]{20}.
Some related identities can also be proved:

\begin{Proposition}
Let $\Re(1-\nu)>\Re\mu>0$. Assume that $f\in L_{\rm loc}^1((m,M))$ and
that the integrals on the right-hand side
of the four identities below converge
absolutely. Then
\begin{gather}
\int_{-\iy}^x \left(\int_m^M f(z) \frac{\Ga(1-\nu)}{(z-y)^{1-\nu}}\,dz
\right)\frac{(x-y)^{\mu-1}}{\Ga(\mu)}\,dy
 =\int_m^M f(z) \frac{\Ga(1-\mu-\nu)}{(z-x)^{1-\mu-\nu}}\,dz\quad(x<m),\!\!\!
\label{94}\\
\int_x^\iy \left(\int_m^M f(z) \frac{\Ga(1-\nu)}{(y-z)^{1-\nu}}\,dz\right)
\frac{(y-x)^{\mu-1}}{\Ga(\mu)}\,dy
 =\int_m^M f(z) \frac{\Ga(1-\mu-\nu)}{(x-z)^{1-\mu-\nu}}\,dz\quad(M<x),\!\!\!
\label{95}\\
\int_m^\iy \left(\int_m^y f(z) \frac{(y-z)^{\mu-1}}{\Ga(\mu)}\,dz\right)
\frac{\Ga(1-\nu)}{(x-y)^{1-\nu}}\,dy
 =\int_m^\iy f(z) \frac{\Ga(1-\mu-\nu)}{(z-x)^{1-\mu-\nu}}\,dz\quad(x<m),
\label{96}\\
\int_{-\iy}^M \left(\int_y^M f(z) \frac{(z-y)^{\mu-1}}{\Ga(\mu)}\,dz\right)
\frac{\Ga(1-\nu)}{(y-x)^{1-\nu}}\,dy
 =\int_{-\iy}^M f(z) \frac{\Ga(1-\mu-\nu)}{(x-z)^{1-\mu-\nu}}\,dz\quad(x>M).\!\!\!\!
\label{97}
\end{gather}
\end{Proposition}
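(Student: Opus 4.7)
The plan is to prove all four identities by a single uniform method: invoke Fubini's theorem to interchange the order of integration, and then evaluate the resulting inner integral as a classical Beta integral via
\begin{gather*}
\int_0^\iy v^{\mu-1}(1+v)^{\nu-1}\,dv=\frac{\Ga(\mu)\Ga(1-\mu-\nu)}{\Ga(1-\nu)},
\end{gather*}
which converges precisely under the hypothesis $\Re(1-\nu)>\Re\mu>0$. No transmutation machinery or hypergeometric theory is needed for this statement; it is purely an identity between iterated integrals.

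I would carry out the argument in detail for \eqref{94} and then indicate how the other three reduce to the same computation. For \eqref{94}, the conditions $x<m\le z\le M$ and $y<x$ ensure that the bases of $(z-y)^{\nu-1}$, $(x-y)^{\mu-1}$, and $(z-x)^{\mu+\nu-1}$ are all strictly positive, so no branch-cut issues arise. The assumed absolute convergence of the right-hand side, combined with the Beta integral evaluation below, makes the double integral absolutely convergent, so Fubini's theorem justifies rewriting the left-hand side as
\begin{gather*}
\int_m^M f(z)\,\Ga(1-\nu)\left(\int_{-\iy}^x (z-y)^{\nu-1}
\frac{(x-y)^{\mu-1}}{\Ga(\mu)}\,dy\right)dz.
\end{gather*}
The substitution $y=x-v(z-x)$, under which $z-y=(z-x)(1+v)$ and $x-y=v(z-x)$, transforms the inner integral into $(z-x)^{\mu+\nu-1}/\Ga(\mu)$ times the Beta integral above, which after cancellation with the $\Ga(1-\nu)$ prefactor yields exactly $(z-x)^{\mu+\nu-1}\Ga(1-\mu-\nu)$, i.e.\ the right-hand side of \eqref{94}.

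Identity \eqref{95} then follows from \eqref{94} by the reflection $x,y,z,m,M\mapsto-x,-y,-z,-M,-m$, which reverses orientations and interchanges the roles of the two semi-infinite integrals. For \eqref{96} and \eqref{97}, the inner and outer integration variables are coupled through the limits $\int_m^y$ and $\int_y^M$, but after applying Fubini the iterated domain becomes $\{(y,z)\colon m<z<y<\iy\}$ (respectively its reflection), and the remaining integral in $y$ again has the form $\int_0^\iy u^{\mu-1}(u+c)^{\nu-1}\,du$ with $c=z-x>0$ (respectively $c=x-z>0$), so the same Beta integral reduction applies.

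The main obstacle is nothing more than careful bookkeeping of signs and integration limits to ensure that in each of the four cases the arguments of the power factors remain positive (so principal branches are unambiguous) and that Fubini is applicable. Both follow cleanly from the absolute-convergence hypothesis stated in the Proposition and from the inequalities $\Re(1-\nu)>\Re\mu>0$, which are exactly what the Beta integral demands for convergence at both $v=0$ and $v=\iy$.
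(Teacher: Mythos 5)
Your proposal is correct and takes essentially the same route as the paper: the author likewise proves all four identities by Fubini's theorem combined with the beta integral $\int_0^\iy v^{\mu-1}(1+v)^{\nu-1}\,dv=\Ga(\mu)\Ga(1-\mu-\nu)/\Ga(1-\nu)$ (DLMF (5.12.3)), whose convergence is exactly the hypothesis $\Re(1-\nu)>\Re\mu>0$, and also obtains \eqref{95} from \eqref{94} and \eqref{97} from \eqref{96} by reflection. Your write-up merely spells out the substitution $y=x-v(z-x)$ that the paper leaves implicit.
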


The proofs are immediate, by the Fubini theorem and by a version
\cite[(5.12.3)]{3} of the beta integral. Furthermore, \eqref{95}
is an immediate consequence of \eqref{94}, and similarly \eqref{97} of~\eqref{96}.

Examples of formulas \eqref{94}--\eqref{97} can be found by combining
suitable fractional integral formulas in Section~\ref{98} with suitable
generalized Stieltjes transform formulas in Section~\ref{99}.
For instance:
\begin{itemize}\itemsep=0pt
\item
In \eqref{94} let $m=1$, $M=\iy$, $f(z)=z^{c-1} w_6(z;a,b,c)$ and use
\eqref{100} and \eqref{101}.
\item
In \eqref{95} let $m=-\iy$, $M=0$, $f(z)=(-z)^{c-1} w_1(z;a,b,c)$ and use
\eqref{102} and \eqref{90}.
\item
In \eqref{96} let $m=1$, $f(z)=z^{c-1} w_6(z;a,b,c)$ and use
\eqref{82} and \eqref{101}.
\item
In \eqref{97} let $M=0$, $f(z)=(-z)^{c-1} w_1(z;a,b,c)$ and use
\eqref{93} and~\eqref{90}.
\end{itemize}

\subsection*{Acknowledgements}
I am very grateful to Sergei Sitnik for his comments, in particular
about Letnikov's paper~\cite{17} from 1874. Thanks also to
Dmitry Karp for helpful comments. Furthermore, the paper took prof\/it
from comments and lists of typos in referees' reports.

\pdfbookmark[1]{References}{ref}
\LastPageEnding


\begin{thebibliography}{99}
\footnotesize\itemsep=0pt

\bibitem{8}
Andrews G.E., Askey R., Roy R., Special functions, \href{http://dx.doi.org/10.1017/CBO9781107325937}{\textit{Encyclopedia of
  Mathematics and its Applications}}, Vol.~71, Cambridge University Press,
  Cambridge, 1999.

\bibitem{9}
Askey R., Orthogonal polynomials and special functions, Society for Industrial
  and Applied Mathematics, Philadelphia, Pa., 1975.

\bibitem{2}
Askey R., Fitch J., Integral representations for {J}acobi polynomials and some
  applications, \href{http://dx.doi.org/10.1016/0022-247X(69)90165-6}{\textit{J.~Math. Anal. Appl.}} \textbf{26} (1969), 411--437.

\bibitem{1}
Bateman H., The solution of linear dif\/ferential equations by means of def\/inite
  integrals, \textit{Trans. Cambridge Philos. Soc.} \textbf{21} (1909),
  171--196.

\bibitem{5}
Camporesi R., The biradial {P}aley--{W}iener theorem for the {H}elgason
  {F}ourier transform on {D}amek--{R}icci spaces, \href{http://dx.doi.org/10.1016/j.jfa.2014.04.013}{\textit{J.~Funct. Anal.}}
  \textbf{267} (2014), 428--451.

\bibitem{23}
Derezi{\'n}ski J., Hypergeometric type functions and their symmetries,
  \href{http://dx.doi.org/10.1007/s00023-013-0282-4}{\textit{Ann. Henri Poincar\'e}} \textbf{15} (2014), 1569--1653,
  \href{http://arxiv.org/abs/1305.3113}{arXiv:1305.3113}.

\bibitem{4}
Erd\'elyi A., Magnus W., Oberhettinger F., Tricomi F.G., Higher transcendental
  functions, Vol.~I, Mc-Graw Hill, New York, 1953.

\bibitem{15}
Erd\'elyi A., Magnus W., Oberhettinger F., Tricomi F.G., Higher transcendental
  functions, Vol.~II, Mc-Graw Hill, New York, 1953.

\bibitem{26}
Flanders H., Dif\/ferentiation under the integral sign, \href{http://dx.doi.org/10.2307/2319163}{\textit{Amer. Math.
  Monthly}} \textbf{80} (1973), 615--627, {C}orrection,
  \href{http://dx.doi.org/10.2307/2976955}{\textit{Amer. Math. Monthly}}
  \textbf{81} (1974), 145.

\bibitem{20}
Karp D., Prilepkina E., Generalized {S}tieltjes functions and their exact
  order, \textit{J.~Class. Anal.} \textbf{1} (2012), 53--74.

\bibitem{19}
Karp D., Prilepkina E., Hypergeometric functions as generalized {S}tieltjes
  transforms, \href{http://dx.doi.org/10.1016/j.jmaa.2012.03.044}{\textit{J.~Math. Anal. Appl.}} \textbf{393} (2012), 348--359,
  \href{http://arxiv.org/abs/1112.5769}{arXiv:1112.5769}.

\bibitem{6}
Karp D., Sitnik S.M., Inequalities and monotonicity of ratios for generalized
  hypergeometric function, \href{http://dx.doi.org/10.1016/j.jat.2008.10.002}{\textit{J.~Approx. Theory}} \textbf{161} (2009),
  337--352, \href{http://arxiv.org/abs/math.CA/0703084}{math.CA/0703084}.

\bibitem{13}
Kodavanji S., Rathie A.K., Paris R.B., A derivation of two transformation
  formulas contiguous to that of Kummer's second theorem via a dif\/ferential
  equation approach, \href{http://arxiv.org/abs/1501.06173}{arXiv:1501.06173}.

\bibitem{17}
Letnikov A.V., Research related to the theory of integrals of the form
  $\int_0^x (x-u)^{p-1} f(u) du$. Chapter~III. Application to the integration
  of certain dif\/ferential equations, \textit{Mat. Sb.} \textbf{7} (1874),
  111--205 ({i}n {R}ussian).

\bibitem{11}
Lions J.L., Op\'erateurs de {D}elsarte et probl\`emes mixtes, \textit{Bull.
  Soc. Math. France} \textbf{84} (1956), 9--95.

\bibitem{16}
Miller K.S., Ross B., An introduction to the fractional calculus and fractional
  dif\/ferential equations, A Wiley-Interscience Publication, John Wiley \& Sons,
  Inc., New York, 1993.

\bibitem{24}
Miller Jr. W., Lie theory and generalizations of the hypergeometric functions,
  \href{http://dx.doi.org/10.1137/0125026}{\textit{SIAM~J. Appl. Math.}} \textbf{25} (1973), 226--235.

\bibitem{3}
Olver F.W.J., Lozier D.W., Boisvert R.F., Clark C.W. (Editors), N{IST} handbook
  of mathematical functions, U.S. Department of Commerce National Institute of
  Standards and Technology, Washington, DC, Cambridge University Press,
  Cambridge, 2010, {a}vailable at \url{http://dlmf.nist.gov}.

\bibitem{14}
Rainville E.D., Special functions, The Macmillan Co., New York, 1960.

\bibitem{25}
Saito M., Symmetry algebras of normal {${\mathcal A}$}-hypergeometric systems,
  \href{http://dx.doi.org/10.14492/hokmj/1351516751}{\textit{Hokkaido Math.~J.}} \textbf{25} (1996), 591--619.

\bibitem{21}
Sitnik S.M., Transmutations and applications: a~survey,
  \href{http://arxiv.org/abs/1012.3741}{arXiv:1012.3741} ({i}n {R}ussian).

\bibitem{22}
Sitnik S.M., {B}uschman--{E}rdelyi transmutations, classif\/ication and
  applications, \href{http://arxiv.org/abs/1304.2114}{arXiv:1304.2114}.

\bibitem{18}
Sostak R.Ya., Aleksei Vasilevic Letnikov, \textit{Istor.-Mat. Issled.}
  \textbf{5} (1952), 167--238 ({i}n {R}ussian).

\bibitem{12}
Swathi M., Rathie A.K., Paris R.B., A~derivation of two quadratic
  transformations contiguous to that of {G}auss via a~dif\/ferential equation
  approach, \href{http://arxiv.org/abs/1411.5262}{arXiv:1411.5262}.

\bibitem{10}
Szeg{\H{o}} G., Orthogonal polynomials, \textit{American Mathematical Society,
  Colloquium Publications}, Vol.~23, 4th~ed., Amer. Math. Soc., Providence,
  R.I., 1975.

\bibitem{7}
Widder D.V., The {S}tieltjes transform, \href{http://dx.doi.org/10.2307/1989901}{\textit{Trans. Amer. Math. Soc.}}
  \textbf{43} (1938), 7--60.

\end{thebibliography}
\end{document}